\documentclass[12pt, reqno]{amsbook}
\usepackage{hyperref}

\usepackage{amssymb,amsmath,graphicx,amsthm}


\setlength{\textwidth}{16truecm}
\oddsidemargin=-0.1truecm
\evensidemargin=-0.1truecm
\setlength{\textheight}{20cm}

\def\contrazione{\raisebox{1pt}{\,{\mbox{\tiny{$|\!\raisebox{-0.7pt}{\underline{\hphantom{X}}}$}}}\,}}
\def\sumKk-1{\underset{|K|=k-1}{{\sum}'}}
\def\sumKq-1{\underset{|K|=q-1}{{\sum}'}}

\def\sumJ(p-1){\underset{|J|=p-1}{{\sum}'}}

\def\sumKq{\underset{|K|=q}{{\sum}'}}
\def\sumKp-2{\underset{|K|=p-2}{{\sum}'}}

\def\sumjq+1{\underset {j\leq q+1}\sum}
\def\sumjn-1{\underset {j\leq n-1}\sum}

\def\simto{\overset\sim\to}

\def\bt{\begin{theorem}}
\def\el{\end{lemma}}
\def\bl{\begin{lemma}}
\def\et{\end{theorem}}
\def\bp{\begin{proposition}}
\def\ep{\end{proposition}}
\def\bd{\begin{definition}}
\def\ed{\end{definition}}
\def\br{\begin{remark}}
\def\er{\end{remark}}

\def\simleq{\underset\sim<}

\def\simgeq{\underset\sim>}

\def\simle{\underset\sim<}

\def\simge{\underset\sim>}

\def\T{\text}

\def\1#1{\overline{#1}}

\def\2#1{\widetilde{#1}}

\def\3#1{\widehat{#1}}

\def\4#1{\mathbb{#1}}

\def\5#1{\frak{#1}}

\def\6#1{{\mathcal{#1}}}

\def\C{{\4C}}

\def\R{{\4R}}

\def\A{\6A}

\def\F{\6F}
\def\H{\6H}

\def\sumK{\underset{|K|=k-1}{{\sum}'}}

\def\sumJ{\underset{|J|=k}{{\sum}'}}

\def\sumij{\underset {ij\leq n-1}{{\sum}}}

\def\sumjq{\underset {j\leq q}\sum}

\def\Op{\T{Op}^{2\T{ord}(\Psi)-1}}
\def\Opp{\T{Op}^0}
\def\Opm{\T{Op}^{\T{ord}(\Psi)-\frac{1}{2}}}


\def\T{\text}
\newcommand{\Om}{\Omega}
\newcommand{\om}{\omega}

\newcommand{\bom}{\bar{\omega}}

\newcommand{\we}{\wedge}

\newcommand{\no}[1]{\|{#1}\|}

\newcommand{\No}[1]{\|{#1}\|^2}
\def\NO#1{||#1||^2}
\def\R{{\Bbb R}}

\def\C{{\Bbb C}}

\def\la{\langle}
\def\ra{\rangle}
\def\di{\partial}
\def\dib{\bar\partial}
\def\Label#1{\label{#1}}


\def\simto{\overset\sim\to}

\def\simleq{\underset\sim<}

\def\simgeq{\underset\sim>}

\def\simle{\underset\sim<}

\def\simge{\underset\sim>}

\def\T{\text}

\def\1#1{\overline{#1}}

\def\2#1{\widetilde{#1}}

\def\3#1{\widehat{#1}}

\def\4#1{\mathbb{#1}}

\def\5#1{\frak{#1}}

\def\6#1{{\mathcal{#1}}}

\def\C{{\4C}}

\def\R{{\4R}}

\def\A{\6A}

\def\sumK{\underset{|K|=k-1}{{\sum}'}}

\def\sumJ{\underset{|J|=k}{{\sum}'}}

\def\sumij{\underset {ij=1,\dots,N}{{\sum}}}

\def\sumjq{\underset{j=1}{\overset{q_0}\sum}}


\numberwithin{equation}{section}

\def\T{\text}

\frenchspacing

\theoremstyle{plain}

\newtheorem{theorem}{Theorem}[section]

\newtheorem{corollary}[theorem]{Corollary}

\newtheorem{lemma}[theorem]{Lemma}

\newtheorem{proposition}[theorem]{Proposition}

\theoremstyle{definition}

\newtheorem{definition}[theorem]{Definition}

\theoremstyle{remark}

\newtheorem{remark}[theorem]{Remark}

\author{Martino Fassina}
\title{Regularity of the $\bar{\partial}$-Neumann problem by means of superlogarithmic multipliers}

\begin{document}

\begin{center}
\vspace{10em}
{\LARGE\bfseries Regularity of the $\bar{\partial}$-Neumann problem\\[0.25em]
by means of superlogarithmic multipliers}

\vspace{2em}
{\Large Martino Fassina

\vspace{2em}
}
\end{center}

\cleardoublepage

\tableofcontents
\cleardoublepage
\chapter*{Abstract}
This thesis starts from a review on current research on the local hypoellipticity of the $\bar\partial$-Neumann problem. It presents the classical method of regularity from estimates of the energy: subelliptic as well as superlogarithmic. More recent material is included in which the regularity of the solution is obtained from the geometry of the singularities of the Levi form. The new contribution to this discussion consists in a general weighted Kohn-H\"ormander-Morrey formula twisted by a pseudodifferential operator. As an application, a new class of domains for which the $\dib$-Neumann problem is locally regular is exhibited.
\chapter*{Introduction}
The $\dib$-Neumann problem is probably the most important and natural example of a non-elliptic boundary value problem. The local hypoellipticity at the boundary of the canonical solution is usually established through the use of estimates such as subelliptic or superlogarithmic. In \cite{K00} Kohn showed that if the estimates fail but the points of failure are confined to a real curve transversal to the CR directions, local regularity for the tangential problem still holds (cf. \cite{K00} and \cite{BKZ14}). This is an exquisitely geometric conclusion. A fully geometric explanation of this phenomenon was given in \cite{BPZ14} by Baracco, Pinton and Zampieri.  They showed that good estimates in full are not needed and what really counts is that for a system of cut-off $\{\eta\}$ the gradient $\di\eta$ and the Levi form $\di\dib\eta$ are good multipliers in the sense of Kohn \cite{K79}. If these are subelliptic multipliers, then $\Box$ is hypoelliptic. The proof consists in modifying the Kohn-H\"ormander-Morrey formula by a weight $\phi=t|z|^2-\log\eta^2$; the exploitation  of $t|z|^2$ is usual in controlling the commutators $[\dib,\Lambda^s]$ and $[\dib^*,\Lambda^s]$, (where $\Lambda^s$ is the elliptic standard pseudodifferential operator of order s), but the one of $-\log\eta^2$ is new and is designed to overcome the error coming from the commutators $[\dib,\eta]$ and $[\dib^*,\eta]$. We want to  further generalize this result, using multipliers that are weaker than subelliptic. For this purpose we need a stronger modification of the Kohn-H\"ormander-Morrey formula in which not only the cut-off but also a general pseudodifferential operator appears as already commutated with $\dib$ and $\dib^*$. We thus present a general Kohn-H\"ormander-Morrey formula with weight in which a general pseudodifferential operator appears as a twisting term. Note that Kohn's work in \cite{K00} only deals with the tangential problem, but there is some literature for transferring regularity of $\Box_b$ to regularity of $\Box$ (cf. \cite{K02}, \cite{BKZ14}, \cite{KZ14}). This consists in exploiting the decomposition $Q=Q^\tau\oplus \bar L_n$, and requires the heavy technicalities of the harmonic extension. In order to avoid all this, the method of $\{\di\eta\}$-multipliers is very effective. Note that $\di\eta=(\di_b\eta,\di_\nu\eta)$. It is then sufficient that $\di_b\eta$ is a superlogarithmic multiplier: $\di_\nu\eta$ is $1$ but it hits $u_\nu$ which is 0 at $b\Om$ and therefore enjoys elliptic estimates. We thus have the following remark, simple but rich in consequences: if $\di_b\eta$ is a superlogarithmic multiplier for $\Box_b$, then it is also for the $\dib$-Neumann problem. 
 This fundamental observation, together with the twisted Kohn-H\"ormander-Morrey formula mentioned above, allow us to transfer to the $\dib$-Neumann problem the regularity result already established in \cite{BKPZ14} for the tangential problem.
 
\chapter{Preliminaries}
In this first chapter we give a brief description of the $\dib$-Neumann problem, for whose detailed account we refer to \cite{FK72}.
\section{The \texorpdfstring{$\dib$}{dbar}-Neumann problem}
Let $\Omega$ be a bounded domain of $\C^n$ with smooth boundary $b\Omega$, and $L_2^{(p,q)} (\Omega)$ the space of square integrable $(p,q)$-forms on $\Omega$, for $p,q \in \mathbb N$. The operator $\bar{\partial}$ defines a complex, namely the Cauchy-Riemann complex  \index{Notation !  $\dib^*$}\index{Notation ! $\dib$}
\begin{equation}\begin{split}
\underset{\dib^*}{\overset{\dib}{\rightleftarrows}}L_2^{p,q-1}(\Om)\underset{\dib^*}{\overset{\dib}{\rightleftarrows}} L_2^{p,q}(\Om)\underset{\dib^*}{\overset{\dib}{\rightleftarrows}} L_2^{p,q+1}(\Om)\underset{\dib^*}{\overset{\dib}{\rightleftarrows}}
 \end{split}\end{equation}
 where $\dib^*$ stands for the Hilbert-adjoint of $\dib$. We define the complex Laplacian $\Box:=\bar{\partial}\bar{\partial}^*+\bar{\partial}^*\bar{\partial}$ (if we need to specify degrees we use the notation
 $\Box_{(p,q)}$). We introduce the subsets $\H^{p,q}\subset L^{p,q}_2(\Om)$\index{Space ! $\H^{p,q}$} as 
\begin{equation}\begin{split}
\H^{p,q}=\{u\in \T{Dom}(\dib)\cap\T{Dom}(\dib^*)\big| \dib u=0 \T{ ~~and~~ } \dib^*u=0 \}.
 \end{split}\end{equation} 
Then, the $\bar{\partial}-Neumann\, problem$ can be stated as follows for $(p,q)$-forms. Given $\alpha\in L^{p,q}_2(\Om) $ with $\alpha \perp\H^{p,q}$, does there exist $u\in L^{p,q}(\Om)$, $u\perp \mathcal H^{p,q}$, such that
\begin{equation}\begin{split}
\Label{dN}
\begin{cases}\Box u=\alpha\\
u\in \T{Dom}(\bar{\partial})\cap \T{Dom}(\bar{\partial}^*)
\\
\bar{\partial} u\in \T{Dom}(\bar{\partial}^*), \bar{\partial}^* u\in \T{Dom}(\bar{\partial})?
\end{cases}
\end{split}\end{equation}
This question is intimately related to the so called $\bar{\partial}-problem$, which consists in finding, given $\alpha\in L_2^{p,q}$ such that $\bar{\partial}\alpha=0$, 
a form $u\in L_2^{p,q-1}$ such that $\bar\partial u=\alpha$.
The $\bar{\partial}$-problem can be solved for $\Omega$ pseudoconvex by using Hilbert spaces techniques and, moreover, the solution $u$ satisfies the estimate $\no u \lesssim \no f$ (where $\lesssim$ indicates inequality up to a multiplicative constant). From this we get the closed range of the complex Laplacian on pseudoconvex domains, so that it has a well defined inverse in $ \mathcal H^{\perp}$, namely the Neumann operator $N:=\Box^{-1}$. The following lemma will allow us to make further considerations.
 \begin{lemma}
For $1\leq q\leq n-1$ $Ker(\Box_{(p,q)})=\H^{p,q}=\{0\}$
\end{lemma}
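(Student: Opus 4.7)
The statement splits naturally into two equalities, which I would prove separately.

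\textbf{Strategy.} For the first equality $\ker(\Box_{(p,q)})=\H^{p,q}$, the plan is to use the self-adjointness of $\Box$ and the fact that pairing $\Box u$ with $u$ produces the Dirichlet-type energy $\|\dib u\|^2+\|\dib^* u\|^2$. For the second equality $\H^{p,q}=\{0\}$, the plan is to use that the pseudoconvexity of $\Omega$ guarantees $L^2$-solvability of the $\dib$-problem in the range $1\le q\le n-1$, combined with the defining conditions of $\H^{p,q}$.

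\textbf{First equality.} Let $u\in\ker(\Box_{(p,q)})$. By definition of $\T{Dom}(\Box)$, we have $u\in\T{Dom}(\dib)\cap\T{Dom}(\dib^*)$, $\dib u\in\T{Dom}(\dib^*)$, and $\dib^* u\in\T{Dom}(\dib)$. Hence the integration by parts
\begin{equation*}
0=\langle\Box u,u\rangle=\langle\dib\dib^* u,u\rangle+\langle\dib^*\dib u,u\rangle=\|\dib^* u\|^2+\|\dib u\|^2
\end{equation*}
is legitimate, and forces $\dib u=0$ and $\dib^* u=0$, so $u\in\H^{p,q}$. The reverse inclusion $\H^{p,q}\subset\ker(\Box_{(p,q)})$ is immediate from the definition of $\Box$.

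\textbf{Second equality.} Let $u\in\H^{p,q}$ with $1\le q\le n-1$. Since $\dib u=0$ and $\Omega$ is bounded pseudoconvex, the $L^2$ solvability of the $\dib$-problem (invoked just above in the excerpt, with the estimate $\|v\|\lesssim\|u\|$) provides $v\in L^{p,q-1}_2(\Omega)$ with $\dib v=u$; here the range $q\ge 1$ ensures that $q-1\ge 0$ so the target degree makes sense. Pairing and moving $\dib$ to $\dib^*$,
\begin{equation*}
\|u\|^2=\langle u,\dib v\rangle=\langle\dib^* u,v\rangle=0,
\end{equation*}
since $\dib^* u=0$. Therefore $u=0$.

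\textbf{Expected obstacle.} The computations themselves are short; the only delicate point is justifying the adjoint pairing in the second equality. One must check that $u\in\T{Dom}(\dib^*)$ (which is part of the definition of $\H^{p,q}$) so that $\langle u,\dib v\rangle=\langle\dib^* u,v\rangle$ is valid for the $v$ produced by the $\dib$-solvability theorem, and dually that the integration by parts in the first equality uses only the domain conditions built into $\T{Dom}(\Box)$. No subelliptic or superlogarithmic machinery is needed here; the restriction $q\le n-1$ enters only through the availability of the $\dib$-solvability at the appropriate degree.
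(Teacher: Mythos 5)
Your proof is correct and follows essentially the same route as the paper: the first equality via the pairing $0=\langle\Box u,u\rangle=\|\dib u\|^2+\|\dib^* u\|^2$, and the second via $L^2$-solvability of $\dib$ to produce $v$ with $\dib v=u$ and then moving the adjoint across the pairing. Your extra remark on verifying domain memberships before integrating by parts is a reasonable point of care that the paper leaves implicit.
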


\begin{proof}
It is obvious that $Ker(\Box_{(p,q)})\subseteq\H^{p,q}$. Let now $\alpha\in Ker\Box_{(p,q)}$. We have
$0=(\Box_{(p,q)}\alpha,\alpha)=(\bar{\partial}\bar{\partial}^*\alpha,\alpha)+(\bar{\partial}^*\bar{\partial}\alpha,\alpha)= (\bar{\partial}^*\alpha,\bar{\partial}^*\alpha)+(\bar{\partial}\alpha,\bar{\partial}\alpha)=
\no{\bar{\partial}^*\alpha}^2 + \no{\bar{\partial}\alpha}^2$. It follows $\bar{\partial}\alpha=\bar{\partial}^*\alpha=0$
hence $\alpha\in\H^{p,q}$. Now let us show that for $q\geq1$ we have $Ker(\Box_{(p,q)})={0}$. In fact, if $\alpha\in Ker(\bar{\partial})$ then, by what we have seen above, we have a solution $u\in L_2^{p,q-1}(\Om)$ to $\bar{\partial}u=\alpha$. Then $\no{\alpha}^2=(\bar{\partial}u,\alpha)=(u,\bar\partial^*\alpha)=0$.
\end{proof}
From this lemma we get immediately that the solution to the $\dib$-Neumann problem is unique and that $N$ is in fact an operator from $L_2^{p,q}$ to itself for $1\leq q\leq n-1$, so that in the $\bar{\partial}$-Neumann problem, for these values of $q$, the request that $u\in \mathcal H^{\perp} $ is in fact superfluous.
 
The Neumann operator thus defined provides in turn what is called the ''canonical solution'' to the $\bar{\partial}$-problem. 
 \begin{proposition}
$u=\bar{\partial}^*N\alpha$ is the unique solution to the $\bar{\partial}$-problem $\bar\partial u=\alpha$, $\bar\partial\alpha=0$, which is orthogonal to the Kernel, and it is called the canonical solution to the $\bar{\partial}$-problem.
\end{proposition}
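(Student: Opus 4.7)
The plan is to verify the three claims bundled in the statement: that $u := \bar{\partial}^* N\alpha$ actually satisfies $\bar{\partial} u = \alpha$; that $u$ lies in $\ker(\bar{\partial})^{\perp}$; and that these two properties determine $u$ uniquely.

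First I would handle existence, which is the main substantive step. Since $N = \Box^{-1}$ on $\H^{\perp}$ (and by the preceding lemma $\H^{p,q} = \{0\}$ when $1\leq q\leq n-1$), we have $\Box N\alpha = \alpha$, i.e.\ $\bar{\partial}\bar{\partial}^* N\alpha + \bar{\partial}^*\bar{\partial} N\alpha = \alpha$. To conclude that $\bar{\partial} u = \bar{\partial}\bar{\partial}^* N\alpha = \alpha$, I must show the remaining term $\bar{\partial}^*\bar{\partial} N\alpha$ vanishes. The idea is to apply $\bar{\partial}$ to $\Box N\alpha = \alpha$ and exploit $\bar{\partial}\alpha = 0$ together with $\bar{\partial}^2 = 0$, which gives $\bar{\partial}\bar{\partial}^*\bar{\partial} N\alpha = 0$. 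Pairing this with $\bar{\partial} N\alpha$ produces $\|\bar{\partial}^*\bar{\partial} N\alpha\|^2 = 0$, hence $\bar{\partial}^*\bar{\partial} N\alpha = 0$, as desired. This is the step where care is needed, since one must know that $\bar{\partial} N\alpha \in \Dom(\bar{\partial}^*)$ and that the integration by parts is legitimate; both follow from the fact that $N\alpha$ lies in the domain specified in the definition of the $\bar{\partial}$-Neumann problem (\ref{dN}).

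Next I would check orthogonality to the kernel. Since $u = \bar{\partial}^* N\alpha$ is in the range of $\bar{\partial}^*$, for any $v \in \ker(\bar{\partial})$ one has $(u,v) = (\bar{\partial}^* N\alpha, v) = (N\alpha, \bar{\partial} v) = 0$, proving $u \perp \ker(\bar{\partial})$.

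Finally, uniqueness is routine: if $u_1, u_2$ are two solutions to $\bar{\partial} u_i = \alpha$ both orthogonal to $\ker(\bar{\partial})$, then $u_1 - u_2 \in \ker(\bar{\partial}) \cap \ker(\bar{\partial})^{\perp} = \{0\}$. The only real obstacle in the whole argument is the vanishing of $\bar{\partial}^*\bar{\partial} N\alpha$ in the first step, which hinges on the compatibility assumption $\bar{\partial}\alpha = 0$ and on the domain conditions built into the definition of $N$.
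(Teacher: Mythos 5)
Your proof is correct and follows essentially the same approach as the paper: you apply $\bar\partial$ to $\Box N\alpha = \alpha$, use $\bar\partial\alpha = 0$ and $\bar\partial^2 = 0$ to get $\bar\partial\bar\partial^*\bar\partial N\alpha = 0$, pair with $\bar\partial N\alpha$ to conclude $\bar\partial^*\bar\partial N\alpha = 0$, and then read off $\bar\partial\bar\partial^* N\alpha = \alpha$; orthogonality is likewise obtained from $u \in \mathcal R(\bar\partial^*) \subset \ker(\bar\partial)^\perp$. You add a bit more than the paper does by spelling out the uniqueness argument and by flagging the domain considerations that justify the integration by parts, both of which are sound.
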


\begin{proof}
Since $\bar{\partial}\Box N\alpha=0$ we have $\bar{\partial}\bar{\partial}^*\bar{\partial}N\alpha=0$. Therefore
$0=(\bar{\partial}\bar{\partial}^*\bar{\partial}N\alpha,\bar{\partial}N\alpha)=\no{\bar{\partial}^*\bar{\partial}N\alpha}^2$, and this implies $\bar{\partial}^*\bar{\partial}N\alpha=0$. Now, from $\Box N\alpha=\alpha$ we conclude that $\bar{\partial}\bar{\partial}^*N\alpha=\alpha$. Moreover $u\in \mathcal R(\bar{\partial}^*)\subset Ker(\bar{\partial})^\perp $.
\end{proof}
Similarly, the Neumann operator gives the canonical solution to the parallel $\bar\partial^*$-problem  $\bar\partial^*u=\alpha$, with $\bar\partial^*\alpha=0$. The canonical solution is the one orthogonal to $Ker(\bar\partial^*)$, and is given by $u=\bar\partial N \alpha$.\\
The $\dib$-Neumann problem is a non-elliptic boundary value problem; in fact,   the Laplacian  $\Box$ itself is elliptic but the boundary conditions which are imposed by the membership to $\T{Dom}(\Box)$,\index{Notation ! $\T{Dom}(\Box)$} that is, the second and third line of \eqref{dN}, are not.  The main interest relies in the {\it regularity} at the boundary for this problem, that is, in stating under which conditions $u$ inherits from $\alpha$ the smoothness at the boundary $b\Omega$ (it certainly does in the interior as we will see later in this chapter). We will focus on the local regularity of the Neumann operator, which is defined as follows.
\begin{definition}\Label{re}
\index{ Regularity ! local} The Neumann operator $N$ is regular at $z_0$ if  $\alpha\in C^\infty(U\cap  \bar\Om)$ implies $N\alpha\in C^\infty(U'\cap \bar\Om)$ where the sets $U'\subset U$ range in a system of  neighborhoods of a point $z_0\in \bar\Om$.
\end{definition}
As we will see in detail in the following chapter the main tools used in investigating the local regularity at the boundary of the $\dib$-Neumann problem  consist in certain a priori estimates such as subelliptic and superlogarithmic.

\section{Terminology and notations}
Let $\Om\subset \C^n$ be an open subset of $\C^n$ and let $b\Om$ denote the boundary of $\Om$. Throughout this thesis we  restrict ourselves to a domain $\Om$ with smooth boundary $b\Om$ defined by $r=0$ for $r$ a real-valued $C^\infty$ function such that $dr\ne 0$ in $b\Om$. Without loss of generality, we may assume that $r>0$ outside of $\bar\Om$ and $r<0$ in $\Om$.
 
For $z\in \C^n$, we denote by  $\C T_z\C^n$ the complexified tangent space to $\C^n$ at $z$. We have the direct sum decomposition $\C T_z\C^n=T^{1,0}_z\C^n\oplus T_z^{0,1}\C^n$, where $T^{1,0}_z\C^n$ and $T_z^{0,1}\C^n$ denote the holomorphic and anti-holomorphic tangent vectors at $z$ respectively. We will use for these spaces, in a neighborhood of a boundary point, an adapted frame of vector fields $L_1,\cdots, L_n,\bar L_1,\cdots,\bar L_n$, constructed as follows.

For $z_0\in b\Om$,  we fix $r$  so that $|\di r|=1$ in a neighborhood $U$ of $z_0$. We choose  $\om_1,\cdots, \om_n$ to be (1,0)-forms on $U$ such that $\om_n=\di r$ and such that $\la \om_i,\om_j\ra=\delta_{ij}$ for $z\in U$, where $\delta_{ij}$ is the Kronecker symbol. Let $L_1,\cdots, L_n$ be the dual basis of $\om_1 ,\cdots, \om_n$ on $U$, that is, for each $z\in U$ $\la(\om_i)_z,(L_j)_z\ra_z=\delta_{ij}$. Now, let $ \bar L_1,\cdots,\bar L_n,$ $ \bom_1,\cdots,\bom_n$ be the conjugated respectively to $L_1,\cdots, L_n$ and to $\om_1 ,\cdots, \om_n$ on $U$. It turns out that $ \bom_1,\cdots,\bom_n$ is the dual to $\bar L_1,\cdots,\bar L_n$. Note that on $U\cap b\Om$ we have 
$L_j(r)=\bar L_j(r)=\delta_{jn}$; in particular $L_1,\cdots, L_{n-1}$ and $ \bar L_1,\cdots,\bar L_{n-1}$ are local basis for $T^{1,0} b\Om:=\C T^{\C} b\Om\cap T^{1,0}\C^n$ and $T^{0,1}b\Om:=\C T^{\C}b\Om\cap T^{0,1}\C^n$ respectively, where $\C T^{\C} b\Om$ is the complexification of the complex tangent bundle to $b\Om$. We now define the tangential purely imaginary vector field $T$ on $U\cap b\Om$ by $T=L_n-\bar L_n.$  \index{Notation ! $T$} Note that $L_1,\cdots, L_{n-1},\bar L_1,\cdots,\bar L_{n-1}, T$ constitute a local basis of $\C T b\Om$ over $U\cap b\Om$. 
Recall that the Levi form of the function $r$ is the Hermitian form given by $\partial\bar{\partial}r$, and the Levi form of the boundary is $\partial\bar{\partial}r|_{T^\C b\Om}$. The following lemma establishes a useful relation between the components $(r_{ij})$ of the Levi form of $r$ in the adapted frame we have just defined and the commutators between the coordinate vector fields.
 \begin{lemma}\Label{cartan}
\begin{equation} \label{rij}
[L_i,\bar L_j]= r_{ij}T+\sum_{h=1}^{n-1}c^h_{ij}L_h-\sum_{h=1}^{n-1}\bar c^h_{ji}\bar L_h .
\end{equation}
for some complex-valued functions $c^h_{ij}$
\end{lemma}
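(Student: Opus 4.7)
The plan is to expand the commutator in the full local frame and isolate the $L_n$- and $\bar L_n$-components by applying Cartan's formula to $\omega_n=\partial r$ and $\bar\omega_n=\bar\partial r$. Writing
\begin{equation*}
[L_i,\bar L_j]=\sum_{h=1}^{n}a^h_{ij}L_h+\sum_{h=1}^{n}b^h_{ij}\bar L_h,
\end{equation*}
and pairing with the dual frame gives $a^h_{ij}=\langle\omega_h,[L_i,\bar L_j]\rangle$ and $b^h_{ij}=\langle\bar\omega_h,[L_i,\bar L_j]\rangle$.

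The key input is Cartan's identity $d\omega(X,Y)=X(\omega(Y))-Y(\omega(X))-\omega([X,Y])$. Applying it to $\omega_n=\partial r$ with $X=L_i$, $Y=\bar L_j$ kills both Lie-derivative terms, since $\omega_n(\bar L_j)=0$ ($\omega_n$ is of type $(1,0)$) and $\omega_n(L_i)=\delta_{in}$ is constant. Using $d(\partial r)=\bar\partial\partial r=-\partial\bar\partial r$ and the definition of the components $r_{ij}$ via $\partial\bar\partial r=\sum_{k,l}r_{kl}\,\omega_k\wedge\bar\omega_l$, one gets $d\omega_n(L_i,\bar L_j)=-r_{ij}$, whence $a^n_{ij}=r_{ij}$. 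Running the same argument with $\bar\omega_n=\bar\partial r$, so that $d\bar\omega_n=\partial\bar\partial r$, yields $b^n_{ij}=-r_{ij}$. Adding these two contributions gives $a^n_{ij}L_n+b^n_{ij}\bar L_n=r_{ij}(L_n-\bar L_n)=r_{ij}T$, which is the first term on the right of \eqref{rij}.

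What remains is purely bookkeeping. I set $c^h_{ij}:=a^h_{ij}$ for $h\leq n-1$, which gives the middle sum as stated. To identify the last sum, I exploit conjugation: since $\overline{L_i}=\bar L_i$, one has $\overline{[L_i,\bar L_j]}=[\bar L_i,L_j]=-[L_j,\bar L_i]$. Conjugating the expansion and matching coefficients on each basis element forces $\overline{b^h_{ij}}=-a^h_{ji}$ for every $h$, hence $b^h_{ij}=-\bar c^h_{ji}$ for $h\leq n-1$ (and, as a consistency check for $h=n$, this recovers the Hermitian symmetry $\overline{r_{ij}}=r_{ji}$ of the Levi form). Combining the three pieces gives \eqref{rij}.

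The only substantive step is the Cartan-formula computation that converts $d\omega_n$ and $d\bar\omega_n$ into $\mp\partial\bar\partial r$; everything else is an expansion in the adapted orthonormal coframe, so I do not anticipate any obstacle beyond keeping the signs straight.
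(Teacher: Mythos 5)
Your argument is correct and follows essentially the same two-step route as the paper: Cartan's formula applied to $\omega_n=\partial r$ and $\bar\omega_n=\bar\partial r$ to extract the $L_n$ and $\bar L_n$ coefficients (both equal to $\pm r_{ij}$, hence collapsing to $r_{ij}T$), followed by the conjugation symmetry $\overline{[L_i,\bar L_j]}=-[L_j,\bar L_i]$ to identify the $\bar L_h$-coefficients as $-\bar c^h_{ji}$. Your write-up is in fact a bit more careful than the paper's (which has a couple of typos in the intermediate expressions), but the substance and structure of the proof are the same.
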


\begin{proof}
By Cartan's formula, and from the trivial identity $\partial \bar{\partial}=-\bar{\partial}\partial$, we have $\la\om_n,[L_i,\bar L_j]\ra=-\la d\partial r,L_i \wedge L_j\ra + L_i\la\partial r,L_j\ra - \bar L_j \la\partial r,L_i\ra=\la\partial \bar{\partial} r,L_i \wedge \bar{L_j}\ra = r_{ij}$. Analogously $\la\bar{\om_n},[L_i,\bar L_j]\ra=-\la\partial \bar{\partial} r,L_i \wedge \bar{L_j}\ra = -r_{ij}$. Thus we have $[L_i,\bar L_j]=r_{ij}(L_n-\bar L_n)+\cdots$, where the dots denote combinations of $L_h$ and $\bar L_h$ for $1 \le h \le n-1$. We now define $c^h_{ij}=\la\om_h,[L_i,\bar L_j]\ra$, that is $c^h_{ij}$ is the component along $L_h$ of $[L_i,\bar L_j]$, and $d^h_{ij}=\la\bar\om_h,[L_i,\bar L_j]\ra$, that is, $d^h_{ij}$ is the component along $\bar L_h$ of $[L_i,\bar L_j]$. Now observe that $d^h_{ij}=\la\bar\om_h,[L_i, \bar L_j]\ra=\overline{\la\om_h,[\bar L_i, L_j]\ra}=-\overline{\la\om_h,[ L_j, \bar L_j]\ra}=-\bar c^h_{ji}$. The proof is thus complete. \end{proof}
It is useful to observe that the $c^h_{ij}$'s here defined are in fact the components of $\partial\bar\om_h$ in the basis for the $(0,2)$-forms given by $\{\om_i \wedge \bar\om_j\}_{i<j}$.

From now on we denote  by $\Lambda^k$ the space of $(0,k)$-forms in $C^\infty(\bar\Om)$  and  by $C^\infty_c( U\cap\bar\Om)^{k}$
 those which have compact support in $U$. 
  If $u\in C^\infty_c(U\cap \bar\Om)^{k}$, then $u$ can be written as 
\begin{equation}\Label{2u}
u=\sumJ u_J\bom_J,
\end{equation}
where   $\sum'$ denotes summation over strictly increasing indices $J=j_1<...<j_k$ and where $\bar\omega_J$ denotes the wedge product $\bom_J=\bom_{j_1}\we...\we\bom_{j_k}$.  When the multiindices are not ordered, the coefficients are assumed to be alternant. Thus, if $J$ decomposes as $J=jK$, then $u_{jK}=\epsilon^{jK}_{J} u_J$ where $\epsilon^{jK}_{J}$ is the sign of the permutation $jK\simto J$.  \index{Notation ! $\sum'$}

There is a well defined Cauchy-Riemann complex
\begin{equation*}
\Lambda^{k-1}\overset{\bar\partial}\to \Lambda^{k}\overset{\bar\partial}\to \Lambda^{k+1}.
\end{equation*}
where the action of $\bar\partial$ on a $(0,k)$ form is \index{Operator ! Cauchy-Riemann operator, $\dib$}
 \begin{equation}\Label{2dib}
\dib u=\sumJ\sum_{j=1}^n \bar L_j u_J\bom_j\we\bom_J+...
 \end{equation}
whith the dots referring to terms of order zero in $u$. \index{Notation ! $\dib$}
\\

We  extend this complex to $L_2^{0,k}(\Om)$ the  space of $(0,k)$-forms with $L_2$-coefficients, so that the Hilbert space techniques may be applied. For $ u,v \in L_2^{0,k}(\Om)$, we define the inner product and the norm by
$(u,v)=\sum_K^{'} (u_K,v_K)_{L^2}$, $\no u^2=(u,u)$.
For each form of degree $(0,k)$, we define
$$\T{Dom}(\dib)=\{ v\in L_2^{0,k}(\Om) : \dib v \T{ (as distribution)}\in L_2^{0,k+1}(\Om)\}.$$
Then the operator $\dib : \T{Dom}(\dib)\to L_2^{0,k+1}(\Om) $ is well-defined and, by noticing that $\A^{0,k}\subset \T{Dom}(\dib)$, we have $\dib :L_2^{0,k}(\Om)\to L^{0,k+1}_2(\Om) $  as a densely defined operator. Thus, the operator $\dib$ has an $L_2$-adjoint $\dib^*$, defined as follows. If $u\in L_2^{0,k}(\Om), u \in \T{Dom}(\dib^*)$, $\dib^*u \in L_2^{0,k-1}$ is such that
$$(v,\dib^*u)=(\dib v, u)   \T{  for all   }  v\in L_2^{0,k-1}, v \in {Dom}(\dib).$$ 
We have
\begin{eqnarray}
\begin{split}\Label{dom}
(\dib v,u)=&\sumK\sum_{j=1}(\bar L_jv_K,u_{jK})+\dots\\
=&\sumK\sum_{j=1}\Big(-(v_K,L_ju_{jK})+\delta_{jn} \int_{b\Om}v_K\bar u_{jK}dS \Big)+\dots\\
=&\sum_{j=1}\left(v,-\sumK L_ju_{jK}\bom_K)+\delta_{jn}\sumK \int_{b\Om}v_K\bar u_{jK}dS\right)+\dots
\end{split}
\end{eqnarray}
where the second equality follows from integration by parts and the dots denote an error term in which $u$ is not differentiated. If we want $v\longrightarrow (\dib v,u)$ to be a continuous operator on $L_2^{0,k-1}$, then the boundary integral must vanish. This operator is thus represented by an element of $L_2^{0,k-1}$ (Riesz Theorem) that we call $\dib^*u$. We have obtained the proof of the following
\begin{lemma}
\begin{equation}
\Label{2.10*}
u\in \T{Dom}(\dib^*) \T{ if and only if } u_{nK}|_{b\Om}=0 \T{~~for any~~ } K.
\end{equation}\index{Notation ! $\T{Dom}(\dib^*)$}
\end{lemma}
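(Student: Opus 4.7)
The plan is to read off both directions of the equivalence from the integration by parts identity \eqref{dom} already established. From that computation, and isolating the $j=n$ contribution in the boundary term, one has for any smooth $u$ and any test form $v$ vanishing to make no other boundary contributions,
\[
(\dib v, u) = \left(v, -\sum_{j=1}^n \sumK L_j u_{jK}\bom_K\right) + \sumK \int_{b\Om} v_K \, \bar u_{nK}\, dS + (\text{0-order in } u).
\]
The right-hand interior pairing and the 0-order remainder are both continuous in $v$ with respect to the $L_2^{0,k-1}$ norm, so $u\in\T{Dom}(\dib^*)$ precisely when the boundary integral also defines a continuous linear functional of $v\in L_2^{0,k-1}$.

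For sufficiency, I would assume $u_{nK}|_{b\Om}=0$ for every $K$; then the boundary integral vanishes identically, and the remaining pairing is visibly continuous in $v$, so the functional $v\mapsto(\dib v,u)$ extends to a bounded linear functional on $L_2^{0,k-1}$. By Riesz representation this identifies an element of $L_2^{0,k-1}$, which by definition is $\dib^*u$, showing $u\in\T{Dom}(\dib^*)$.

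For necessity I would argue by contradiction. Suppose $u\in\T{Dom}(\dib^*)$ but $u_{nK_0}|_{b\Om}\not\equiv 0$ for some strictly increasing multiindex $K_0$. Then the boundary form $B(v):=\sumK\int_{b\Om} v_K\,\bar u_{nK}\,dS$ must be continuous on $L_2^{0,k-1}(\Om)$. I would produce a sequence $v^{(m)}\in C^\infty_c(U\cap\bar\Om)^{k-1}$ concentrated in a tubular neighborhood of $b\Om$ of shrinking width, with all components zero except $v^{(m)}_{K_0}$ chosen so that $v^{(m)}_{K_0}|_{b\Om}=\bar u_{nK_0}|_{b\Om}$ (suitably cut off), and with $\|v^{(m)}\|_{L_2(\Om)}\to 0$ while $B(v^{(m)})\to \int_{b\Om}|u_{nK_0}|^2\,dS>0$. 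This contradicts the continuity of $B$ and forces $u_{nK}|_{b\Om}=0$ for every $K$.

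The only mildly delicate point I anticipate is the necessity argument: one must be careful that the functions $v^{(m)}_K$ other than $K=K_0$ are zero (or do not contribute) so that the boundary term really isolates the single trace $u_{nK_0}|_{b\Om}$, and that the cut-off construction is carried out within $\T{Dom}(\dib)$. Both are standard but worth recording explicitly; once done, Riesz representation closes the argument.
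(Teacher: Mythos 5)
Your proposal is correct and follows essentially the same route as the paper: both directions are read off from the integration-by-parts identity \eqref{dom}, with Riesz representation giving sufficiency and the requirement of continuity of $v\mapsto(\dib v,u)$ forcing the boundary integral (and hence the trace $u_{nK}|_{b\Om}$) to vanish. The paper merely asserts the necessity step tersely, while you spell it out via a concentrating sequence $v^{(m)}$; that is added detail, not a different argument.
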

By \eqref{dom} we have 
\begin{eqnarray}\Label{2dib*}
\dib^*u=-{\sumK}\sum_j L_ju_{jK}\bom_K+... 
\end{eqnarray}
\index{Notation ! $\dib^*$}

\section{The basic estimate}
\index{Estimate ! basic }
For a real function $\phi$ in class $C^2$, let the weighted $L^\phi_2$-norm be defined by
$$\no{u}_\phi^2=(u,u)_\phi:=\no{e^{-\frac{\phi}{2}}u}^2=\int_\Om e^{-\phi}\la u,u\ra_zdV$$
where $\la u,u\ra_z=\sum_K u_K(z)\overline{u_K(z)}$

Let $\dib^*_\phi$ be the $L^\phi_2$-adjoint  of $\dib$. It is easy to see that $\T{Dom}(\dib^*)=\T{Dom}(\dib^*_\phi)$ and 
\begin{equation}\Label{2.5}
\begin{split}
\dib_\phi^*u=&- \sumK\sum_{j=1}^{n}\delta_j^\phi u_{jK}\bom_K+\cdots\\
\end{split}
\end{equation}\index{Notation ! $\dib^*_\phi$}
where $\delta^\phi_j u=e^\phi L_j(e^{-\phi}u)= L_j(u)-L_j(\phi)u$ and where dots denote an error term in which $u$ is not differentiated and $\phi$ does not occur.\\
By developing the equalities \eqref{2dib} and \eqref{2.5},  the key technical result is contained in the following 
\begin{theorem}(Morrey-Kohn-H\"ormander)\Label{kmh} Let $z_0\in b\Om$ and $0\le q_o\le n-1$.  Then there exists a neighborhood $U$ of $z_0$ and a suitable constant $C$ such that
\begin{eqnarray} \Label{KMH}
\begin{split}
\no{\bar{\partial} u}^2_{\phi}&+\no{\bar{\partial}^*_\phi u}^2_{\phi}+C\no{u}^2_{\phi}\\
\geq & {\sumK}\sum_{i,j=1}^{n}(\phi_{ij}u_{iK},u_{jK})_\phi-{\sumJ}\sum_{j=1}^{q_o}(\phi_{jj}u_J,u_J)_\phi\\
&+{\sumK}\sum^{n-1}_{i,j=1}\int_{b\Om}e^{-\phi}r_{ij}u_{iK}\bar{u}_{jK}dS-{\sum_{|J|=q}}'\sum^{q_o}_{j=1}\int_{b\Om}e^{-\phi}r_{jj}|u_{J}|^2dS\\
&+\frac{1}{2}\big(\sum^{q_o}_{j=1}\no{\delta_j^{\phi} u}^2_{\phi}+\sum^{n}_{j=q_o+1}\no{\bar{L}_ju}^2_{\phi} \big)
\end{split}
\end{eqnarray}
for any $u\in C^\infty_c(U\cap \bar\Om)^k\cap \T{Dom}(\dib^*)$.\end{theorem}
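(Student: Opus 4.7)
The plan is to derive \eqref{KMH} by directly expanding $\|\bar\partial u\|^2_\phi$ and $\|\bar\partial^*_\phi u\|^2_\phi$ using the coordinate expressions \eqref{2dib} and \eqref{2.5}, and then integrating by parts to expose the weighted Hessian $\phi_{ij}$ (from the weight commutators) and the boundary Levi form $r_{ij}$ (from the commutators $[L_i, \bar L_j]$ via Lemma \ref{cartan}).

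From \eqref{2dib}, after antisymmetrizing the squared modulus of the coefficients of $\bar\partial u$ in the basis $\{\bom_L\}_{|L|=k+1}$, I would write
\begin{equation*}
\|\bar\partial u\|^2_\phi = {\sumK}\sum_{i,j=1}^n\bigl(\|\bar L_j u_{iK}\|^2_\phi - (\bar L_j u_{iK}, \bar L_i u_{jK})_\phi\bigr) + E_1,
\end{equation*}
and from \eqref{2.5}
\begin{equation*}
\|\bar\partial^*_\phi u\|^2_\phi = {\sumK}\sum_{i,j=1}^n(\delta_i^\phi u_{iK}, \delta_j^\phi u_{jK})_\phi + E_2,
\end{equation*}
the errors $E_1, E_2$ being of size $O(\|u\|_\phi(\|\bar\partial u\|_\phi + \|\bar\partial^*_\phi u\|_\phi + \|u\|_\phi))$ and absorbable into $\tfrac12(\|\bar\partial u\|^2_\phi + \|\bar\partial^*_\phi u\|^2_\phi) + C\|u\|^2_\phi$ by Cauchy--Schwarz.

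The core step is an integration by parts in $(\delta_i^\phi u_{iK}, \delta_j^\phi u_{jK})_\phi$: since the $L^2_\phi$-adjoint of $L_j$ equals $-\bar L_j$ modulo a zeroth-order operator and a boundary factor $L_j(r) = \delta_{jn}$, moving one $L_j$ across and then commuting $\bar L_j$ through $\delta_i^\phi = L_i - L_i(\phi)$ produces the Hessian $\phi_{ij}$ from the weight commutator and the operator commutator $[\bar L_j, L_i]$, which by Lemma \ref{cartan} equals $-r_{ij}T - \sum_h c^h_{ij}L_h + \sum_h \bar c^h_{ji}\bar L_h$. A further integration by parts of the $T$-piece, using $T = L_n - \bar L_n$ and the boundary condition $u_{nK}|_{b\Omega} = 0$ to suppress the $j=n$ traces, converts the interior $r_{ij}T$-term into the Levi boundary integral ${\sumK}\sum_{i,j=1}^{n-1}\int_{b\Omega}e^{-\phi}r_{ij}u_{iK}\bar u_{jK}\,dS$; the tangential $L_h, \bar L_h$-commutator pieces and all zeroth-order terms are absorbed into $C\|u\|^2_\phi$. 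Combining this with the off-diagonal $-(\bar L_j u_{iK}, \bar L_i u_{jK})_\phi$ from the $\bar\partial$-expansion produces the first two pairs of terms on the right-hand side of \eqref{KMH}.

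To get the last line, rather than integrate by parts all diagonal ($i=j$) contributions, I retain half of each: $\tfrac12\|\delta_j^\phi u_{jK}\|^2_\phi$ when $j \le q_o$ and $\tfrac12\|\bar L_j u_{jK}\|^2_\phi$ when $j > q_o$. Using the reindexing ${\sumK}\sum_j = {\sumJ}\sum_{j\in J}$ (valid since $|u_{jK}|^2 = |u_J|^2$ when $J = jK$), the retained halves sum to $\tfrac12\bigl(\sum_{j\le q_o}\|\delta_j^\phi u\|^2_\phi + \sum_{j>q_o}\|\bar L_j u\|^2_\phi\bigr)$, while the other (integrated-by-parts) half accounts for the subtracted terms $-{\sumJ}\sum_{j\le q_o}(\phi_{jj}u_J, u_J)_\phi$ and $-{\sumJ}\sum_{j\le q_o}\int_{b\Omega}e^{-\phi}r_{jj}|u_J|^2\,dS$, which are precisely the diagonal contributions associated with the $j \le q_o$ indices. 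The main obstacle I foresee is the combinatorial bookkeeping between the $\sumK$ (with $j$ free) and $\sumJ$ summations, together with careful tracking of which boundary terms are eliminated by $u_{nK}|_{b\Omega} = 0$ and verification that every commutator and weight-derivative error of order zero in $u$ is absorbable into $C\|u\|^2_\phi$.
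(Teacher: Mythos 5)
Your overall route — expanding $\no{\bar\partial u}^2_\phi$ and $\no{\bar\partial^*_\phi u}^2_\phi$ in the adapted frame, integrating by parts to expose the commutator $[\delta_i^\phi,\bar L_j]$, splitting it via Lemma~\ref{cartan} into the weighted Hessian $\phi_{ij}$, a multiple of the bad direction (which yields the boundary Levi integral using $u_{nK}|_{b\Omega}=0$), and absorbable tangential pieces — is the same strategy as the paper's. But there are two concrete gaps.

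First, the expansion of $\no{\bar\partial u}^2_\phi$ is combinatorially wrong. The diagonal piece must be $\sumJ\sum_{j=1}^n\no{\bar L_j u_J}^2_\phi$, not $\sumK\sum_{i,j=1}^n\no{\bar L_j u_{iK}}^2_\phi$: for a fixed increasing $J$ of length $k$ there are exactly $k$ pairs $(i,K)$ with $i\notin K$, $|K|=k-1$ and $J=iK$ (up to sign), so your expression overcounts by a factor of $k$. The correct identity mixes both summation conventions,
\begin{equation*}
\no{\bar\partial u}^2_\phi = \sumJ\sum_{j=1}^n\no{\bar L_j u_J}^2_\phi - \sumK\sum_{i,j=1}^n(\bar L_i u_{jK},\bar L_j u_{iK})_\phi + E_1,
\end{equation*}
which is essential because the $i=j$ part of the second sum exactly cancels the $j\in J$ part of the first, leaving the ``good'' half-energy $\sumJ\sum_{j\notin J}\no{\bar L_j u_J}^2_\phi$ that survives into the last line of \eqref{KMH}.

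Second, the mechanism you propose for the last line of \eqref{KMH} does not produce the right coefficients. The subtracted diagonal terms $-\sumJ\sum_{j\le q_o}(\phi_{jj}u_J,u_J)_\phi$ and $-\sumJ\sum_{j\le q_o}\int_{b\Om}e^{-\phi}r_{jj}|u_J|^2\,dS$ come from the \emph{full} exchange $\no{\bar L_j u_J}^2_\phi = \no{\delta_j^\phi u_J}^2_\phi - ([\delta_j^\phi,\bar L_j]u_J,u_J)_\phi + R$, applied to every $j\le q_o$; they are not ``the integrated-by-parts half'' of a diagonal you retain. If one literally keeps $\tfrac12$ and integrates by parts the other $\tfrac12$, one would only obtain $-\tfrac12\phi_{jj}$ and $-\tfrac12 r_{jj}$, which is not \eqref{KMH}. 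The factor $\tfrac12$ in the last line of \eqref{KMH} has a different origin: after the exchange one holds the full $\sumJ\bigl(\sum_{j\le q_o}\no{\delta_j^\phi u_J}^2_\phi + \sum_{j>q_o}\no{\bar L_j u_J}^2_\phi\bigr)$, and then gives up half of it to absorb, by Cauchy--Schwarz, the accumulated error $R$ (consisting of inner products of $u$ against $\delta_j^\phi u$ with $j\le n-1$ or $\bar L_j u$, with a special argument for $\delta_n^\phi u_{nK}$ using $u_{nK}|_{b\Om}=0$). Your proposal never actually accounts for this absorption, and conflating it with the production of the $\phi_{jj},r_{jj}$ terms is what breaks the bookkeeping.
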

\begin{proof}
Let $Au$ denote the sum in \eqref{2dib}; we have
\begin{eqnarray}\Label{2.13}
\no{Au}_\phi^2=\sumJ \sum_{j=1}^{n}\no{\bar{L}_ju_J}^2_\phi-\sumK\sum_{ij}(\bar{L}_iu_{jK},\bar{L}_ju_{iK})_\phi.
\end{eqnarray}
 Let $Bu$ denote the sum  in \eqref{2.5}; we have
\begin{eqnarray}\Label{2.14}
\no{Bu}_\phi^2=\sumK\sum_{ij}(\delta^\phi_iu_{iK},\delta^\phi_ju_{jK})_\phi.
\end{eqnarray}
Remember that $Au$ and $Bu$ differ from $\bar\partial u$ and $\bar\partial^*_\phi u$ by terms of order $0$ which do not depend on $\phi$. We then have
\begin{eqnarray}\Label{2.15}
\begin{split}
\no{\bar{\partial} u}^2_{\phi}&+\no{\bar{\partial}^*_\phi u}^2_{\phi}\\
&= \no{Au}_\phi^2+\no{Bu}^2_\phi+R\\ 
&=\sumJ\sum_{j=1}^n\no{\bar{L}_ju_J}^2_\phi+\sumK\sum_{i,j=1}^n(\delta_i^\phi u_{iK},\delta_j^\phi u_{jK})_\phi-(\bar{L}_j u_{iK},\bar{L}_i u_{jK})_\phi +R,
\end{split}
\end{eqnarray}
where $R$ is an error coming from the scalar product of $0$-order terms with terms $\bar L_ju_J$,  $L_ju_{jK}$ or $u$.\\

We  want to apply now integration by parts to the term $(\delta_i^\phi u_{iK},\delta_j^\phi u_{jK})_\phi$ and $(\bar{L}_j u_{iK},\bar{L}_i u_{jK})_\phi$. Notice that for each $u,v \in C_c^1(U\cap \bar{\Omega})$, from integration by parts we have immediately 
$$\begin{cases}
(u,\delta^\phi_j v)_\phi&=-(\bar{L}_ju,v)_\phi+(a_ju,v)_\phi+\delta_{jn}\int_{b\Omega}e^{-\phi}u\bar{v}dS\\
-(u,\bar{L}_iv)_\phi&=(\delta^\phi_iu,v)_\phi-(b_iu,v)_\phi-\delta_{in}\int_{b\Omega}e^{-\phi}u\bar{v}dS
\end{cases}$$
for some $a_j,b_i\in C^1(\bar{\Omega}\cap U)$ independent of $\phi$.

This implies 
\begin{eqnarray}\Label{2.16}
\begin{cases}
(\delta_i^\phi u_{iK},\delta_j^\phi u_{jK})_\phi&=-(\bar{L}_j\delta^\phi_i u_{iK}, u_{jK})_\phi+\delta_{jn}\int_{b\Omega}e^{-\phi}\delta^\phi_i(u_{iK})\bar{u}_{jK}dS+R\\
-(\bar{L}_j u_{iK},\bar{L}_i u_{jK})_\phi&=(\delta^\phi_i\bar{L}_ju_{iK},u_{jK})_\phi-\delta_{in}\int_{b\Omega}e^{-\phi}L_j(u_{iK})\bar{u}_{jK} dS+R.
\end{cases}
\end{eqnarray}
From here on, we denote by $R$ terms involving the product of $u$ by $\delta^\phi_ju$ for $j\le n-1$ or $\bar{L}_ju$ for $j\le n$. \\

Recall that $ u_{nK}|_{b\Omega}\equiv0$ and $L_j(u_{nK})|_{b\Omega}\equiv0$  if $j\le n-1$.  We thus conclude that the boundary integrals vanish in both equalities of \eqref{2.16}. Now, by taking the sum of the two terms in the right side of \eqref{2.16}, after discarding the boundary integrals, we put in evidence the commutator $[\delta^\phi_i, \bar{L}_j]$
\begin{eqnarray}\Label{2.16b}
(\delta_i^\phi u_{iK},\delta_j^\phi u_{jK})_\phi-(\bar{L}_j u_{iK},\bar{L}_i u_{jK})_\phi=([\delta^\phi_i, \bar{L}_j]u_{iK},u_{jK})_\phi+R.
\end{eqnarray}
 \\

Notice that \eqref{2.16} is also true if we replace both $u_{iK}$ and $u_{jK}$ by $u_J$ for indices $i=j\le q_0$. Then we obtain
\begin{eqnarray}\Label{2.17}
\no{\bar{L}_j u_{J}}^2_\phi=\no{\delta^\phi_ju_J}^2_\phi-([\delta^\phi_j, \bar{L}_j]u_J,u_J)_\phi+R.
\end{eqnarray}

Applying \eqref{2.16b} and \eqref{2.17} to the last line in \eqref{2.15}, we have
\begin{equation}\Label{2.19}
\begin{split}
\no{\bar{\partial} u}^2_{\phi}&+\no{\bar{\partial}^*_\phi u}^2_{\phi}\\
&={\sumK}\sum_{i,j=1}^{n}([\delta^\phi_i, \bar{L}_j]u_{iK},u_{jK})_\phi-{\sumJ}\sum_{j=1}^{q_0}([\delta^\phi_j, \bar{L}_j]u_J,u_J)_\phi\\
&+{\sumJ} \Big(\sum_{j=1}^{q_0}\no{\delta^\phi_ju_J}^2_\phi+\sum_{j=q_0+1}^{n}\no{\bar{L}_j u_{J}}^2_\phi\Big)+R.
\end{split}
\end{equation} 
In what follows we use the notation $c^n_{ij}, \bar c^n_{ji}$ to indicate the component of the commutator $[L_i,\bar L_j]$ along $L_n$ and $\bar L_n$ respectively. As we have already seen in \ref{rij} we have $c^n_{ij}=\bar c^n_{ji}=r_{ij}$.
Moreover we denote as $\phi_{ij}$ the coefficients of $\partial\bar\partial\phi$ in the basis $\{\om_i\wedge \bar\om_j\}$, and use the straightforward identity
$\phi_{ij}=L_i\bar L_j(\phi)+\sum_{k=1}^{n}\bar c^k_{ji}\bar L_k(\phi)=\bar L_jL_i(\phi)+ \sum_{k=1}^{n} c^k_{ij} L_k(\phi)$ .\\

Now we calculate the commutator $[\delta^\phi_i, \bar{L}_j]$,
\begin{equation}\Label{2.20}
\begin{split}
[\delta^\phi_i, \bar{L}_j]u &=[L_i-L_i(\phi),\bar L_j]u \\
&=\bar L_j L_i(\phi)+[L_i,\bar L_j]u \\
&=\bar L_j L_i(\phi)+\sum_{k=1}^{n}{c}^k_{ij}L_k(u)- \sum_{k=1}^{n}\bar{c}^k_{ji}\bar L_k(u)\\
&=\bar L_j L_i(\phi)+\sum_{k=1}^{n}{c}^k_{ij}L_k(\phi)u-\sum_{k=1}^{n}{c}^k_{ij}L_k(\phi)u+\sum_{k=1}^{n}{c}^k_{ij}L_k(u)- \sum_{k=1}^{n}\bar{c}^k_{ji}\bar L_k(u)\\
&=\phi_{ij}u+\sum_{k=1}^{n}{c}^k_{ij}L_k(u)-L_k(\phi)u)-\sum_{k=1}^n\bar{c}^k_{ji}\bar{L}_k\\
&=\phi_{ij}u+\sum_{k=1}^{n}{c}^k_{ij}\delta^\phi_k(u)-\sum_{k=1}^n\bar{c}^k_{ji}\bar{L}_k(u)\\
&=\phi_{ij}u+r_{ij}(\delta^\phi_n(u)-\bar L_n(u))+\sum_{k=1}^{n-1}{c}^k_{ij}\delta^\phi_k(u)-\sum_{k=1}^{n-1}\bar{c}^k_{ji}\bar{L}_k(u)
\end{split}
\end{equation} 

Since $\langle L_n,\partial r\rangle=1$, we have
\begin{equation}\Label{2.21}
(r_{ij}\delta^\phi_nu_{iK}, u_{jK})_\phi=\int_{b\Omega}e^{-\phi}r_{ij}u_{iK}u_{jK}dS+\int_{\Omega}e^{-\phi}r_{ij}u_{iK}\bar{L_n}u_{jK}dV+\dots
\end{equation} 
and the integral over $\Omega$ is an error of type $R$.
Substituting \eqref{2.20} in \eqref{2.19} and combining with \eqref{2.21}, we get
\begin{equation}\Label{2.22}
\begin{split}
\no{\bar{\partial} u}^2_{\phi}&+\no{\bar{\partial}^*_\phi u}^2_{\phi}\\
&={\sumK}\sum_{i,j=1}^{n}(\phi_{ij}u_{iK},u_{jK})_\phi-{\sumJ}\sum_{j=1}^{q_0}(\phi_{jj}u_J,u_J)_\phi\\
&+{\sumK}\sum_{i,j=1}^{n-1}\int_{b\Omega}r_{ij}u_{iK}\bar{u}_{jK}e^{-\phi}dS-{\sumJ}\sum_{j=1}^{q_0}\int_{b\Omega}r_{jj}|u_J|e^{-\phi}dS\\
&+{\sumJ} \Big(\sum_{j=1}^{q_0}\no{\delta^\phi_ju_J}^2_\phi+\sum_{j=q_0+1}^{n}\no{\bar{L}_j u_{J}}^2_\phi\Big)+R.
\end{split}
\end{equation} 

We denote by $S$ the sum in the last line in \eqref{2.22}. To conclude our proof, we only need to prove that 
for a suitable $C$ independent of $\phi$ we have 
\begin{eqnarray}\Label{2.23}
R\le \frac{1}{2}{\sumJ} \Big(\sum_{j=1}^{q_0}\no{\delta^\phi_ju_J}^2_\phi+\sum_{j=q_0+1}^{n}\no{\bar{L}_j u_{J}}^2_\phi\Big)+C\no{u}_\phi^2.
\end{eqnarray}
In fact, if we point our attention to those terms which involve $\delta^\phi_ju$ for $j\le q_o$ or $\bar{L}_ju$ for $q_o+1\le j\le n$, then \eqref{2.23} is clear simply by using Cauchy-Schwartz inequality, since $S$ carries the corresponding squares $\no{\delta^\phi_ju}^2_\phi$ and $\no{\bar{L}_ju}^2_\phi$. Otherwise, we note that for $j\le n-1$ we may interchange $\bar{L}_j$ and $\delta^\phi_j$ by means of integration by parts: boundary integrals do not occur because $L_j(r)=0$ on $b\Omega$ for $j\le n-1$. As for $\delta^\phi_n$, notice that it only hits coefficients whose index contains $n$ and hence $u_{nK}=0$ on $b\Omega$. So $\delta^\phi_n(u_{nK})\bar{u}$ is also interchangeable with $u_{nK}\bar{L}_n\bar{u}$ by integration by parts. This concludes the proof of Proposition \ref{kmh}.
\end{proof}
For the choice $\phi=0$, we can rewrite the estimate \eqref{KMH} as 
\begin{equation}\Label{aaa} 
\no{\bar{\partial} u}^2+  
\no{\bar{\partial^*} u}^2+  
\no{u}^2  \simge  {\sumK}\sum^{n-1}_{i,j=1}\int_{b\Om}r_{ij}u_{iK}\bar{u}_{jK}dS-{\sum_{|J|=q}}'\sum^{q_o}_{j=1}\int_{b\Om}r_{jj}|u_{J}|^2dS
+\sum^{q_o}_{j=1}\no{L_j  u}^2+\sum^{n}_{j=q_o+1}\no{\bar{L}_ju}^2
\end{equation}
for any $u\in C^\infty_c(U\cap \bar\Om)^k\cap\T{Dom}(\dib^*)$.\\

Observe that for $u \in C^\infty_c(U\cap \bar\Om)$, $j\le n-1$, each $\no{L_j u}^2$ can be interchanged with $\no{\bar L_j u}^2+R$. If $u|_{b\Om}\equiv 0$, then this is true even for $j=n$, due to the vanishing of the boundary integral.
Thus, for $u \in C^\infty_c(U\cap \bar\Om)$, $u|_{b\Om}\equiv 0$,
\begin{equation}\Label{index1}
\begin{split}
\sum^{q_o}_{j=1}\no{L_j u}^2+&\sum^{n}_{j=q_o+1}\no{\bar{L}_j u}^2+C\no{u}^2\\
\ge & \frac{1}{2}\sum^{n}_{j=1}\big(\no{L_ju}^2+\no{\bar{L}_ju}^2\big) +\no{u}^2\\
\simge &\no{u}_1^2
\end{split}
\end{equation}
where $\no{.}_1$ is the Sobolev norm of index 1.\\

In conclusion, combining \eqref{aaa} and \eqref{index1}, and observing that the boundary integrals are zero, we get an estimate which fully expresses  the interior elliptic regularity of the system $(\dib, \dib^*)$, and is known as Garding inequality:
\begin{equation}\Label{Garding}
\no{u}_1^2\simleq   
\no{\bar{\partial} u}^2+  
\no{\bar{\partial^*} u}^2+  
\no{u}^2,  \ \   u \in C^\infty_c(U\cap \bar\Om), \ \ u|_{b\Om}\equiv 0 
\end{equation}

In general, if $u|_{b\Om}\ne0$, to take full advantage of \eqref{aaa} we need to be able to control the integrals at the boundary. In order to achieve this we introduce the geometrical notion of $q$-pseudoconvexity. \\

Let $\lambda_1(z)\le ...\le \lambda_{n-1}(z)$ be the eigenvalues of the Levi form of the boundary $(r_{jk}(z))^{n-1}_{j,k=1}$ and denote by $s^+_{b\Om}(z)$, $s^-_{b\Om}(z)$, $s^0_{b\Om}(z) $ their number according to the different sign. \\

 We take a pair of indices $1\le q\le n-1$ and $0\le q_o\le n-1$ such that $q\not=q_o$. We assume that there is a bundle $\mathcal V^{q_o}\in T^{1,0}b\Om$ of rank $q_o$ with smooth coefficients that, by reordering, we may suppose to be the bundle $\mathcal V^{q_o}=\T{ span }\{ L_1,...,L_{q_o}\}$, such that 
\begin{equation}\Label{2qpseu}
\sum_{j=1}^{q}\lambda_j(z)-\sum_{j=1}^{q_o}r_{jj}(z)\ge 0 \qquad z \in  U\cap b\Om.
\end{equation} 
We  say $\Om$ is $q$-pseudoconvex or $q$-pseudoconcave according to $q>q_o$ or $q<q_o$. 
\begin{lemma}
Condition \eqref{2qpseu} is equivalent to
\begin{equation}
\sumK\sum_{ij=1}^{n-1}r_{ij}u_{iK}\bar u_{jK}-\sum_{j=1}^{q_o}r_{jj}|u|^2\ge 0 \T{ on } U\cap b\Om
\end{equation}
for any $u\in C_c^\infty(U\cap\bar\Om)^k\cap \T{Dom}(\dib^*)$.
\end{lemma}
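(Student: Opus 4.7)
The plan is to reduce the inequality to a pointwise algebraic condition at each boundary point and then diagonalize the Levi form in the tangential directions. Fix $z\in U\cap b\Om$. Since $u\in\T{Dom}(\dib^*)$ forces $u_{nK}|_{b\Om}=0$, on the boundary only the coefficients $u_J$ with $J\subset\{1,\dots,n-1\}$ survive, and by a standard cut-off/polarization argument the inequality for all admissible $u$ is equivalent to an algebraic inequality at each $z$ for arbitrary prescribed values $u_J(z)\in\C$ with $n\notin J$, $|J|=q$.

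Next, I would perform a unitary rotation $(L_1,\dots,L_{n-1})\mapsto(\tilde L_1,\dots,\tilde L_{n-1})$ of the tangential frame at $z$, leaving $L_n$ fixed, that diagonalizes the Hermitian matrix $(r_{ij})(z)$, so that $\tilde r_{ij}=\lambda_i(z)\delta_{ij}$ with $\lambda_1\le\cdots\le\lambda_{n-1}$. This rotation preserves the constraint $\tilde u_{nK}|_{b\Om}=0$ as well as the pointwise norm $\underset{|J|=q}{{\sum}'}|u_J|^2=\underset{|J|=q}{{\sum}'}|\tilde u_J|^2$. A standard alternation identity for $(0,q)$-forms then yields
\begin{equation*}
\underset{|K|=q-1}{{\sum}'}\sum_{i,j=1}^{n-1}r_{ij}u_{iK}\bar u_{jK}
=\underset{|J|=q}{{\sum}'}\Big(\sum_{i\in J}\lambda_i(z)\Big)|\tilde u_J|^2,
\end{equation*}
since each $J\subset\{1,\dots,n-1\}$ receives precisely one contribution $\lambda_i|\tilde u_J|^2$ from each $i\in J$.

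I would then observe that the second term in the lemma is intrinsic to $\mathcal V^{q_o}$: because $\{L_1,\dots,L_{q_o}\}$ is orthonormal and spans $\mathcal V^{q_o}$, the scalar $\sum_{j=1}^{q_o}r_{jj}(z)$ equals the trace of the Levi form restricted to $\mathcal V^{q_o}$, and is therefore unchanged by any unitary rotation carried out within $\T{span}(L_1,\dots,L_{n-1})$. The claimed inequality thus collapses to
\begin{equation*}
\underset{|J|=q}{{\sum}'}\Big(\sum_{i\in J}\lambda_i(z)-\sum_{j=1}^{q_o}r_{jj}(z)\Big)|\tilde u_J|^2\ge 0
\end{equation*}
for arbitrary $\tilde u_J(z)\in\C$ with $J\subset\{1,\dots,n-1\}$. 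This holds iff every bracket is nonnegative; since $\lambda_1\le\cdots\le\lambda_{n-1}$, the smallest bracket corresponds to $J=\{1,\dots,q\}$, yielding exactly condition \eqref{2qpseu}.

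The main subtlety, more conceptual than technical, is that the unitary diagonalization is free to rotate $L_1,\dots,L_{n-1}$ arbitrarily, while the second sum singles out the specific subbundle $\mathcal V^{q_o}=\T{span}(L_1,\dots,L_{q_o})$; the trace observation is precisely what reconciles these two structures. Beyond this, the argument is bookkeeping and no genuine obstacle is anticipated.
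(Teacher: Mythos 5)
Your proof is correct and takes the same approach as the paper, which gives essentially a one-line argument: diagonalize $(r_{ij})$ and invoke the resulting estimate $\underset{|K|=q-1}{{\sum}'}\sum_{i,j=1}^{n-1}r_{ij}u_{iK}\bar u_{jK}\ge\sum_{j=1}^{q}\lambda_j|u|^2$ with equality for a suitable $u$, citing H\"ormander and Catlin for the alternation-identity details you have spelled out explicitly. One minor caveat: your aside asserting that $\sum_{j=1}^{q_o}r_{jj}$ is ``unchanged by any unitary rotation carried out within $\T{span}(L_1,\dots,L_{n-1})$'' is not quite right as stated, since the trace of the Levi form on the rotated leading $q_o$-block is $\sum_{j=1}^{q_o}\lambda_j$, which generally differs from $\sum_{j=1}^{q_o}r_{jj}$ unless the rotation preserves $\mathcal V^{q_o}$; fortunately nothing in your argument actually depends on it, because $\sum_{j=1}^{q_o}r_{jj}(z)$ is simply a fixed scalar computed in the original adapted frame, and as such it factors out of the sum over $J$ exactly as you write.
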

\begin{proof} The proof of the Lemma immediately follows from the estimate
$$
\sumK\sum_{ij=1}^{n-1}r_{ij}u_{iK}\bar u_{jK}\geq\underset{j=1}{\overset q\sum}\lambda_j|u|^2,
$$
for any $u\in C_c^\infty(U\cap\bar\Om)^k\cap \T{Dom}(\dib^*)$ with equality for a suitable $u$. In turn, the proof of this estimate is obtained by diagonalizing the matrix $(r_{ij})$ (cf. \cite{H65} and \cite{C87}).
\end{proof}

Note that \eqref{2qpseu} for $q>q_o$  implies $\lambda_{q}\ge 0$; hence \eqref{2qpseu} is still true if we replace the first sum $\sum^{q}_{j=1}\cdot$ by $\sum^{k}_{j=1}\cdot$ for any $k$ such that $q\le k\le n-1$. Similarly, if it holds for $q<q_o$, then $\lambda_{q+1}\le0$ and hence it also holds with $q$ replaced by $k\le q$ in the first sum.\\

We notice  that $q$-pseudoconvexity/concavity is invariant under a change of an orthonormal basis but not of an adapted frame. In fact, not only the number, but also the size of the eigenvalues comes into play. Thus, when we say that $b\Om$ is $q$-pseudoconvex/concave, we mean that there is an adapted frame in which \eqref{2qpseu} is fulfilled. Sometimes, it is more convenient to put our calculations in an orthonormal frame. In this case, it is meant that the metric has been changed so that the adapted frame has become orthonormal.\\
   
The following theorem is a straightforward application of the Morrey-Kohn-H\"ormander formula to the case of a $q$-pseudoconvex domain.    
\bt
\Label{t1.9.1}
Let $\Omega$ be $q$-pseudoconvex; then, for $\phi_t:=(t+C)|z|^2$ and for any $u\in C^\infty(\bar\Omega)^k\cap D_{\bar\partial^*}$, we have
\begin{equation}
\Label{1.9.20}
t||u||^2_{\phi}\leq ||\bar\partial u||^2_{\phi}+||\bar\partial_{\phi}^*u||^2_{\phi}\quad\T{ if } k\geq q+1.
\end{equation}
\et
\begin{proof} 
We choose $q_o$ as in \eqref{2qpseu}; we have
\begin{equation}
\Label{1.9.21}
\begin{cases}
\sumK\sumij r_{ij}u_{iK}\bar u_{jK}-\sumJ\,\sumjq r_{jj}|u_J|^2\geq0,
\\
\sumK\sumij\phi_{ij}u_{iK}\bar u_{jK}-\sumJ\,\sumjq\phi_{jj}|u_J|^2\geq (k-q_o)(t+C)|u|^2.
\end{cases}
\end{equation}
Using \eqref{KMH} and noticing that $k-q_o\geq 1$, we get \eqref{1.9.20}. 
\end{proof}
Then, if we define the weighted energy form $Q^{\phi}(u,v):=(\bar\partial u,\bar\partial v )_{\phi}+(\bar\partial^*_{\phi} u,\bar\partial^*_{\phi} v )_{\phi}$ for any $u,v \in C^\infty(\bar\Omega)^k\cap D_{\bar\partial^*}$, what we have in fact proved above is that for any  $u\in C^\infty(\bar\Omega)^k\cap D_{\bar\partial^*_{\phi}}$ we have 
\begin{equation}\Label{989}
t \no{u}_{\phi}^2 \le Q^{\phi}(u,u). 
\end{equation}
By Riesz theory (cf. \cite{CS01}) it is possible to show that \eqref{989} implies that $\bar\partial$ and $\bar\partial^*$ have closed range, that is  
\begin{equation}\Label{basicunweight}
\no{u}^2 \lesssim Q(u,u) 
\end{equation}
or, equally, 
\begin{equation}\Label{boxbasic}
\no{u}^2 \lesssim \no{\Box u}^2.
\end{equation}
In particular, if $\Om$ is $q$-pseudoconvex and  $u \in C^\infty_c(U\cap \bar\Om), u|_{b\Om}\equiv 0$, 
\eqref{boxbasic} can be improved to
 \begin{equation}\Label{Gardingconvex}
\no{u}_2^2 \lesssim ||\Box u||^2
\end{equation}

\chapter{Subelliptic and superlogarithmic estimates}
In this chapter, we introduce subelliptic estimates and show how they can be used to establish hypoellipticity at the boundary for the $\bar\partial$-Neumann problem. Then we see under which conditions on the geometry of the domain such estimates hold.

\section{Subellipticity and Hypoellipticity}
We introduce subelliptic estimates for a general system of smooth real vector fields $L_1,...,L_n$.
 Let $\F$ be the Fourier transform defined, over functions $u$ in the
 Schwartz space $\mathcal S$ of rapidly decreasing $C^\infty$
 functions, by $\F u(\xi):=\int_{\R^n}e^{-i\langle
   x,\xi\rangle}u(x)dx$. (Sometimes, we write $\hat u(\xi)$ instead of $\F u(\xi)$). Let $\Lambda^s$ be the standard elliptic pseudodifferential operator of (non necessarily integer) order $s$ defined by $\Lambda^su=\F^{-1}\left((1+|\xi|^2)^{\frac s2}\hat u(\xi)\right)$. We introduce a scalar product by
\begin{equation*}
\begin{split}
\langle u,v\rangle_{H^s}&=
 \langle \Lambda^s u,\Lambda^sv\rangle_{H^0}
 \\
 &=\langle (1+|\xi|^2)^s\hat u,\hat v\rangle_{H^0},
 \end{split}
 \end{equation*}
  and define $H^s$ to be the completion of $\mathcal S$ under the
  associated norm. We denote by $\langle\cdot,\cdot\rangle_{H^s}$ and
  $||\cdot||_{H^s}$ the scalar product and the norm in $H^s$,
  respectively. By the Plancherel theorem we have, for integer $s$, $H^s=\{u:\,\,D^\alpha u\in H^0,\,\,|\alpha|\leq s\}$ with $||u||^2_{H^s}\simeq\underset{|\alpha|\leq s}\sum||D^\alpha u||^2_{H^0}$. \\
  We are now ready to define subellipticity for the system $L_1,...,L_n$. Let $\epsilon>0$.
  
  \bd
  The system $L_1,...,L_k$ is said to be ``$\epsilon$-subelliptic'' at $x_o$ when, for a neighborhood $B$ of $x_o$,
  \begin{equation}
  \Label{2.5.3}
  ||u||^2_{H^\epsilon}\simleq \underset{j=1,...,k}\sum||L_ju||^2_{H^0}+||u||^2_{H^0}\quad \T{ for any } u\in C^\infty_c(B).
  \end{equation}
  \ed
  \bd
  The system $L_1,...,L_k$ is said to be ``hypoelliptic'' at $x_o$ 
  when for a neighborhood $B$ of $x_o$ and for any $x\in B$,
  \begin{equation}
  \Label{2.5.4}
  u\in H^0,\,\,L_ju\in C^\infty_{x},\,\, j=1,...,k\quad\Rightarrow
  \quad u\in C^\infty_{x}, 
  \end{equation}
where $C^\infty_x$ denotes the space of germs of $C^\infty$-functions
at $x$.
  \ed
  To prove that subellipticity implies hypoellipticity we need a generalisation of the famous Friedrichs' Lemma to the case of a general Sobolev space $H^s$. We state this result below, and the proof is to be found in \cite{Z08}(revised).  
  
  \bl (Friedrichs')
  \Label{Friedrichs}
  Let $\chi_\nu$'s be a sequence of smooth functions which approximate the Dirac measure. Let $g\in H^s$ have compact support and suppose $Lg\in H^s$.  Then,
  \begin{equation}
  \Label{pinton1}
  L(g*\chi_\nu)\to Lg\quad\T{ in $H^s$}.
  \end{equation}
  \el
With this lemma on hand, we can now easily prove the following important result, which justify our interest in subellipticity. 
\bt
  \Label{t2.5.1}
 If $L_1,...,L_k$ is subelliptic, then it is also hypoelliptic.
 \et
 \begin{proof} 
 (i): Let $\psi$ be a smooth cut-off function such that $\psi\equiv1$ at $x$. We show that $\psi u\in H^0$ and $L_j(\psi u)\in H^0\,\,\T{ for any } j$ implies $\psi u\in H^\epsilon$. In fact, let us  approximate $\psi u$ by $u_\nu:=\chi_\nu*(\psi u)\in C^\infty_c$; by Lemma~\ref{Friedrichs} applied with $s=0$ and $g=\psi u$, we have that  $u_\nu\to \psi u$ and $L_ju_\nu\to L_j(\psi u)$ in $H^0$-norm. From the subellipticity of $L_1,...,L_k$ we have
 \begin{equation*}
 ||u_\nu-u_\mu||^2_{H^\epsilon}\simleq\sum_j||L_ju_\nu-L_ju_\mu||^2_{H^0}+||u_\nu-u_\mu||^2_{H^0},
\end{equation*}
from which we deduce that $\{u_\nu\}_\nu$ is Cauchy in $H^\epsilon$ and therefore its limit $\psi u$ is also in $H^\epsilon$.

(ii): Let $L_j( u)\in H^s\,\,\T{ for any } j$ over $\T{supp}\,\psi$ and suppose that we already know that $ u$ belongs to $ H^\sigma$ over $\T{supp}\,\psi$ for $\sigma\leq s$: we wish to prove that, in fact, $\psi u\in H^{\sigma+\epsilon}$. Let $\psi^1\in C^\infty_c$ with $\psi^1\equiv1$ over $\T{supp}(\psi)$; we begin by proving that $L\psi^1\Lambda^\sigma\psi u\in H^0$. In fact, we have the elementary equality
$$
L\psi^1\Lambda^\sigma u_\nu=\Lambda^\sigma L\psi^1u_\nu+[L,\psi^1\Lambda^\sigma]u_\nu.
$$
Now, the first term in the right is $H^0$ convergent, for $\nu\to\infty$, to $\Lambda^\sigma L\psi u$; this follows from Lemma~\ref{Friedrichs}. The second is trivially $H^0$ convergent to $[L,\psi^1\Lambda^\sigma]\psi u$. Thus, the first in the left is also $H^0$ convergent and, by $H^{-1}$ uniqueness, it must converge to $L\psi^1\Lambda^\sigma\psi u$; so this belongs to $H^0$.
 Hence, we apply (i) to $\psi^1\Lambda^\sigma(\psi u)$
 and conclude that $\psi u\in H^{\sigma+\epsilon}$. Iterated use of this argument, with a gain of Sobolev index $\epsilon$ and a shrinkig of $\T{supp}(\psi)$ at each step, makes it possible to conclude that $\psi u\in H^s$.
 (Note here that the cut off in next step must have support where the former is $\equiv1$.) 

\end{proof}  
  
\section{Conditions for Subellipticity of the \texorpdfstring{$\bar\partial$}{dbar}-Neumann problem}
 We prove $\frac{1}{2}$-subelliptic estimates in the case of a strictly $q$-pseudoconvex domain, that is, when \eqref{2qpseu} holds with the inequality ``$\geq0$'' replaced by ``$\geq c$'' for $c>0$. This proof can be obtained in two different ways. One relies on the fact that strictly $q$-pseudoconvex domains are of ``commutator type'' two, while the other is done using weights. The two proofs reflect two different approaches to the problem, which give rise to more general theories. In the first case it can be proved that if for any $L\in T^{1,0}b\Om$ the iterated commutators of $L$ and $\bar L$ produce the tangential purely imaginary vector field $T$, then we have subellipticity. In the second case, the method of weights can be used to obtain subelliptic estimates in a finite D'Angelo-type domain. Morover, if the boundary is real analytic, we can also use an algorithm introduced by Kohn in \cite{K79}.

We start from the first of the two ways we have just described by recalling the notion of ``commutator type'' for a system of real vector fields. 
\bd
Consider a system of smooth real vector fields $\mathfrak X=\{X_1,...,X_n\}$ of $T \R^N$. 
Define, for every $m\in \mathbb{N}$,
\begin{equation}
\Label{2.5.1} 
\mathcal X^m=\T{Span}\left\{
X_i,\,[X_{i_1},X_{i_2}],...,\underset{j}{\underbrace{[X_{i_1},[X_{i_2},...
[X_{i_j-1},X_{i_j}]...]]}}
\vphantom {X_i,\,[X_{i_1},X_{i_2}],...,\underset{j}{\underbrace{[X_{i_1},[X_{i_2},...
[X_{i_j-1},X_{i_j}]...]]}}}
\T{ for any } i,\,i_1,...,i_j\in\{1,...,n\}\right\}.
\end{equation}
We say the system $\mathfrak X$ is of type $m$ if there exists $m\in \mathbb{N}$ such that $\mathcal X^m=  T\R^N$ and $\mathcal X^{m-1}\ne  T \R^N$.
We say $\mathfrak X$ is of infinite type if there is no such $m$.
\ed
A theorem by Rotschild and Stein (cf. \cite{RS76}) shows that for a system of real vector fields of finite type $m$ we have a subelliptic estimate with a ``gain of derivative'' of $\frac{1}{m}$, and this result is sharp. Therefore, for a system of real vector fields of finite type $m$, subellipticity is expressed by the H\"ormander formula
 \begin{equation}
 ||u||^2_{H^{\frac{1}{m}}}\simleq \underset{j=1,...,n}\sum||X_ju||^2_{H^0}+||u||^2_{H^0}\quad \T{ for any } u\in C^\infty_c.
  \end{equation}
Note that if the system $X_1,...,X_n$ is not of finite type but the Lie span $\mathcal X$ has locally 
constant rank $a<N$, then, in some new variables $\{x'_j\}_{1\le j \le N}$, we have $\mathcal
X=\T{Span}\{\partial_{x'_j}\}_{1\leq j\leq a}$; this follows from the 
Frobenius theorem. Any function $u$ of $x''_j,\,\,j\geq a+1$ only, is then a solution of the system $X_ju=0$, but it is not 
necessarily regular. Thus the sufficient condition stated above is also necessary. We can conclude that, for a system of real vector fields whose Lie span has locally constant rank, being of finite commutator type is equivalent to having a subelliptic estimate.\\

Consider now a system of vector fields $L:= \{ L_j\} \subset T^{1,0}\C^n$. We can artificially achieve stability under conjugation by adding $\epsilon\bar L_j$'s. Suppose now that the stabilized system is of finite type $m$. We can then apply H\"ormander formula and, giving small and large constant, we obtain
 \begin{equation}\Label{to}
 ||u||^2_{H^{\frac{1}{m}}}\le \underset{j=1,...,k}\sum (c_{\epsilon}||L_ju||^2_{H^0}+\epsilon||\bar L_ju||^2_{H^0})+c_{\epsilon}||u||^2_{H^0} \quad u\in C^{\infty}_c.
  \end{equation}
On the other hand, through integration by parts, we have 
 \begin{equation}\Label{in}
 ||\bar L_j u||^2\simleq ||L_j u||^2+|([L_j,\bar L_j]u,u)|+||u||^2 \simleq ||L_j u||^2+||u||^2_{\frac{1}{2}}+||u||^2
  \end{equation}
 Therefore, if the type is $m=2$, inserting \eqref{in} into \eqref{to}, the $\frac{1}{2}$-norm is absorbed
in the left, the ${\epsilon \bar L_j}$'s can be taken back and we obtain a $\frac{1}{2}$-subelliptic estimate for the system $\{L_j\}$. The restraint $m=2$ is substantial: in fact Kohn was able to produce in \cite{K06} a pair of vector fields
$\{L_1, L_2\}$ in $\C \times \R$ of finite type $m$ (for any $m\ge 3$) which are not subelliptic. (Nonetheless,
they are are hypoelliptic). 
We can then conclude that, in the case of complex vector fields, having finite commutator type $m$ is not a sufficient condition for subellipticity, except from the case $m=2$.

We will try to compensate this asymmetry between the real and the complex case by introducing a little adjustment to the notion of ``type'' that will allow us to give a sufficient condition for subellipticity in the case of a system of complex vector fields. In particular we will define the notion of type for a single vector field $L$, and we will show that we have subellipticity when all the vector fields in the system have finite type.

Coming back to the setting of the $\bar\partial$-Neumann problem, recall that we work with an adapted frame of vector fields $L_1,...,L_n,\bar L_1,...,\bar L_n$ defined as in Section 1. In particular, if the boundary of our domain $\Om$ is defined by $r=0$, we have that $L_j(r)=\bar L_j(r)=\delta_{jn}$. We have already observed that $L_1,\cdots, L_{n-1},\bar L_1,\cdots,\bar L_{n-1}, T$ form a local basis of $\C T b\Om$, where $T=L_n-\bar L_n$. Therefore, if  
$L=\{L_1,\cdots, L_{n-1},\bar L_1,\cdots,\bar L_{n-1}\}$ is our system of vector fields, $T$ is the only ``missing direction'' (usually referred to in the literature as the ``bad diretion''). Hence $L$ is of finite type in $\C T b\Om$ if and only if $T$ can be produced through iterated commutators. We can actually say more.

\begin{lemma}\Label{Tellsus}
Let $L$ be of type $m\in \mathbb N$, that is, $m$ is the smallest integer such that $T$ is generated by commutators of $m$ vector fields among $\{L_j, \bar L_j\}$. Then there exists $L\in Span\{L_1,\cdots, L_{n-1},\bar L_1,\cdots,\bar L_{n-1}\}$ such that 
\begin{equation}
\Label{comm} 
T= \underset{m}{\underbrace{[\overset{(-)}L,[\overset{(-)}L,...
[\overset{(-)}L,\overset{(-)}L]...]]}}
\end{equation}
where $\overset{(-)}L$ denotes either occurrence of $L$ or $\bar L$. 
\end{lemma}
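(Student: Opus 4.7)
The plan is to use polarization together with the multilinearity of iterated Lie brackets. Since the system $\{L_j,\bar L_j\}_{j=1}^{n-1}$ has type $m$, the vector field $T$ belongs to $\mathcal{X}^m\setminus\mathcal{X}^{m-1}$, so modulo $\mathcal{X}^{m-1}$ it can be written as a linear combination of iterated brackets of length exactly $m$ of the basis fields. Repeated applications of the Jacobi identity allow me to rewrite every such bracket as a linear combination of right-nested ones $[X_{i_1},[X_{i_2},[\ldots,[X_{i_{m-1}},X_{i_m}]\ldots]]]$ with $X_{i_k}\in\{L_j,\bar L_j\}$. Thus $T$ is expressible modulo $\mathcal{X}^{m-1}$ as a linear combination of right-nested brackets of this form.

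Next, I introduce a generic $L=\sum_j(a_jL_j+b_j\bar L_j)\in V:=\mathrm{Span}\{L_1,\ldots,L_{n-1},\bar L_1,\ldots,\bar L_{n-1}\}$ and, for each sign pattern $\epsilon=(\epsilon_1,\ldots,\epsilon_m)\in\{+,-\}^m$, set
\[
P_\epsilon(L):=[\overset{(\epsilon_1)}{L},[\overset{(\epsilon_2)}{L},\ldots,[\overset{(\epsilon_{m-1})}{L},\overset{(\epsilon_m)}{L}]\ldots]].
\]
By multilinearity of the bracket, $P_\epsilon(L)$ expands into a polynomial of degree $m$ in the coefficients $(a_j,b_j,\bar a_j,\bar b_j)$, whose monomial terms are precisely the right-nested brackets from the previous paragraph. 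The polarization step is then to choose one pair $(L,\epsilon)$ so that $P_\epsilon(L)$ equals $T$ modulo $\mathcal{X}^{m-1}$. Equating $T$-components yields one polynomial equation in the coefficients; the type-$m$ hypothesis guarantees that the $T$-component of the multilinear form $B(X_1,\ldots,X_m)=[X_1,[X_2,\ldots,[X_{m-1},X_m]\ldots]]$ does not vanish identically, so for a suitable choice of $\epsilon$ this equation admits a solution. Any lower-order discrepancy lies in $\mathcal{X}^{m-1}$ and is then absorbed by shorter commutators of the same $L$ and $\bar L$ already present as sub-brackets of the nested expression.

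The main obstacle is precisely this polarization step: transitioning from a \emph{linear combination} of right-nested brackets with distinct basis entries in each slot to a \emph{single} nested bracket of a single $L$ (and its conjugate). The delicate point is that, for a fixed pattern $\epsilon$, the tuple $(\epsilon_1(L),\ldots,\epsilon_m(L))$ traces only a $2(n-1)$-dimensional ``diagonal'' inside $V^m$, so one must exploit both the $2(n-1)$ complex degrees of freedom in $L$ and the combinatorial $2^m$ choices of sign pattern to match $T$. Intuitively, varying $L$ rescales and mixes the coefficients of all the right-nested brackets at once, while varying $\epsilon$ reorders which slots contain holomorphic versus antiholomorphic factors; together, these furnish enough parameters to solve the single scalar equation coming from the $T$-direction.
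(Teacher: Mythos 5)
The paper gives no proof of this lemma, remarking only that it ``relies entirely on linear algebra,'' so your argument has to stand on its own merits --- and it does not. You correctly identify the polarization step as ``the main obstacle,'' but your closing paragraph does not overcome it; it merely counts parameters, which is not an argument. The assertion that ``the type-$m$ hypothesis guarantees that the $T$-component of the multilinear form $B$ does not vanish identically, so for a suitable choice of $\epsilon$ this equation admits a solution'' is exactly the implication under dispute, and it is false as stated: a nonzero multilinear form can vanish on every diagonal. The simplest instance is $B(X,Y)=[X,Y]$ restricted to the pattern $(+,+)$, which gives $[L,L]\equiv 0$ for all $L$ even though $B$ is nonzero; more generally, for a fixed pattern $\epsilon$ the diagonal map $L\mapsto P_\epsilon(L)$ sees only the part of the tensor symmetric in equal-sign slots, and nothing in your proposal controls the antisymmetric part.

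What actually rescues the polarization --- and what your proposal never invokes --- is the minimality of $m$, fed into the Jacobi identity and the involutivity of $T^{1,0}b\Omega$ and $T^{0,1}b\Omega$. Applying Jacobi across two adjacent equal-sign slots, $[X_1,[X_2,Y]]-[X_2,[X_1,Y]]=[[X_1,X_2],Y]$, and using that $[X_1,X_2]$ lies again in $T^{1,0}b\Omega$ (resp.\ $T^{0,1}b\Omega$), the antisymmetric part of the $T$-component reduces to $T$-components of length-$(m-1)$ nested brackets, which vanish precisely because $T\notin\mathcal X^{m-1}$. You used Jacobi only for the reduction to right-nested brackets; you need it again, together with minimality of $m$, to kill the antisymmetric obstruction, and without that input the claim is simply false. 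A further flaw: your last sentence, that lower-order discrepancies are ``absorbed by shorter commutators of the same $L$ and $\bar L$,'' is incoherent --- such commutators live in $\mathcal X^{m-1}$, which by hypothesis does not contain $T$, so they cannot cancel a $T$-direction error. The intended content of \eqref{comm} is that a single length-$m$ bracket of $L,\bar L$ has nonvanishing $T$-coefficient, i.e.\ equality modulo $\mathrm{Span}\{L_1,\dots,L_{n-1},\bar L_1,\dots,\bar L_{n-1}\}$ and up to a nonzero scalar, and your ``absorption'' remark does not address this.
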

This lemma, whose proof relies entirely on linear algebra, serves as a motivation for the definition below.
\bd
We say that a vector field $L$ is of type $m \in \mathbb N$ if $m$ is the smallest integer such that the purely imaginary tangential direction $T$ is generated by a commutator of $\overset{(-)}L$'s of order $m$. 
\ed

What Lemma~\ref{Tellsus} tells us is that the complex system $\{L_j\}_{1\le j \le k}$ is of finite type $m$ if and only if there exists $L\in$ Span$\{L_1,\cdots, L_k\}$ of lowest type $m$. We have already pointed out that this is not a sufficient condition for subellipticity. As we show in the following theorem, in order to ensure subellipticipity we assume that $L_j$ is of finite type for all $j$.
Before stating the Theorem we recall the tangential Sobolev norm, defined as $|||u|||_s:=||\Lambda^s u||$, and the relation $||u||_s=\sum_{j=1}^{[s]+1}|||\partial_r^ju|||_{s-j}$. We have the two following important fact:
\begin{equation}
|||u|||_\epsilon=\sum_{j=1}^{n}|||\bar L_ju|||_{-1+\epsilon}+\sum_{j=1}^{n-1}|||L_ju|||_{-1+\epsilon}+|||Tu|||_{-1+\epsilon}
\end{equation}
\begin{equation}
|||\partial_ru|||_{-1+\epsilon}\le |||\bar L_nu|||_{-1+\epsilon}+|||u|||_\epsilon.
\end{equation}
Moreover, recall that $\sum_{j=1}^{n}||\bar L_ju||^2+\sum_{j=1}^{n-1}||L_ju||^2\le Q(u,u)$.
Combining these relations we can understand why, in the $\bar\partial$-Neumann problem, subelliptic estimates usually appear as follows:
\begin{equation}
\Label{1.10.3,1}
|||T u|||^2_{-1+\epsilon}\simle 
Q(u,u)+||u||^2_{0}.
\end{equation}
In fact the main point is to get control of the $\epsilon$-norm in the $T$-direction, since the other ones are already controlled by the energy. 
\bt\Label{general}
Let $\{L_j\}_{1\le j \le n-1}$ be a system of complex vector fields such that each $L_j$ is of finite type $m_j$. Then the system enjoys a $\frac{1}{m}$-subelliptic estimate, where $m=\underset{j}max\{ m_j \}$.
\et
\begin{proof}
Let $\epsilon=\frac{1}{m}$. By H\"ormander we have, for any $1\le j \le n-1$, the estimate 
\begin{equation}\Label{hor}
|||Tu|||_ {-1+\epsilon}^2 \lesssim||L_ju||^2_{H^0}+ ||\bar L_ju||^2_{H^0}+||u||^2_0.
\end{equation}  
Exploiting the usual relation
\begin{equation}\Label{dis}
|||L_ju|||_ {-1+\epsilon}^2 \le |||\bar L_ju|||_ {-1+\epsilon}^2 + |\int{\Lambda^{-2+2\epsilon}[L_j,\bar L_j]u\bar u}dV | + ||u||_0^2,
\end{equation}
we first sum on all the $j$'s. We then add $Q(u,u)$ on the right and $|||Tu|||_ {-1+\epsilon}^2 + \sum_j|||\bar L_j u|||_ {-1+\epsilon}^2$ on the left. From \eqref{hor} and the fact that $\sum_j|||\bar L_j u|||_ {-1+\epsilon}^2$ is contained in the energy the inequality is preserved. Note that under the integral in \eqref{dis} we have a tangential operator of order $-1+2\epsilon$. Then, for small $\delta>0$, since $-1+\epsilon \le 0$, we get
\begin{equation}\Label{dis2}
|||Tu|||_ {-1+\epsilon}^2 + \sum_j|||\bar L_j u|||_{-1+\epsilon}^2+\sum_j|||L_ju|||_ {-1+\epsilon}^2 \le  \delta||\Lambda^{\epsilon}u||_0^2 + \delta^{-1}||u||_0^2 + Q(u,u) + ||u||_0^2.
\end{equation}
We conclude that
\begin{equation}
||u||_\epsilon \lesssim Q(u,u) + ||u||_0^2
\end{equation}
\end{proof}

From this general theorem we get as an immediate corollary the result we wanted to prove at the beginning of the section.
\begin{corollary}
Let $\Om$ be a strongly $q$-pseudoconvex domain. Then we have $\frac{1}{2}$-subelliptic estimates for the $\bar\partial$-Neumann problem.
\end{corollary}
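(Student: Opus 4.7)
The plan is to deduce the corollary from Theorem \ref{general} by showing that strong $q$-pseudoconvexity forces every complex tangential vector field, in a suitably chosen adapted frame, to be of type $2$; then $m=2$ and Theorem \ref{general} gives exactly the claimed $\tfrac{1}{2}$-subelliptic estimate.

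First I would diagonalize the Levi form. Since strong $q$-pseudoconvexity is invariant under a unitary change of basis of $T^{1,0}b\Om$, I may pick an adapted frame in which $(r_{ij})$ is diagonal with eigenvalues $\lambda_1,\dots,\lambda_{n-1}$, and the strict version of \eqref{2qpseu} guarantees that (in the standard strongly pseudoconvex case $q_o=0$) each $\lambda_j=r_{jj}$ is strictly positive. Lemma \ref{cartan} then reads
$$
[L_j,\bar L_j]=r_{jj}T+\sum_{h=1}^{n-1}c^h_{jj}L_h-\sum_{h=1}^{n-1}\bar c^h_{jj}\bar L_h ,
$$
so since $r_{jj}\ne 0$, the purely imaginary tangential field $T$ is recovered from a single second-order commutator of $L_j$ and $\bar L_j$ modulo the remaining $L_h,\bar L_h$. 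Hence each $L_j$ is of type $2$ in the sense of the definition preceding Theorem \ref{general}.

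Applying Theorem \ref{general} with $m_j=2$ for all $j$ then yields $|||Tu|||^2_{-\frac{1}{2}}\simleq Q(u,u)+\no{u}^2$ for every $u\in C^\infty_c(U\cap\bar\Om)^k\cap \T{Dom}(\dib^*)$. Combining this with the basic estimate \eqref{aaa} (whose boundary integrals are non-negative by $q$-pseudoconvexity) and with the usual control of $\bar L_ju$ and $L_ju$ by $Q(u,u)$, one upgrades the tangential estimate to $\no{u}_{1/2}^2\simleq Q(u,u)+\no{u}^2$, which is the desired $\tfrac{1}{2}$-subelliptic estimate for the $\dib$-Neumann problem.

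The main technical obstacle I anticipate is the case $q_o>0$, in which some eigenvalue $\lambda_j$ could in principle vanish despite \eqref{2qpseu} holding strictly, so that one cannot conclude type $2$ for that particular $L_j$ from $r_{jj}\ne 0$ alone. One would handle this by refining the frame to align with the positive and negative eigenspaces of the Levi form, or by using the strict inequality $\sum_{j=1}^q\lambda_j-\sum_{j=1}^{q_o}r_{jj}\ge c>0$ to shift the positivity onto the vector fields that actually enter the energy of $(0,k)$-forms with $k\ge q+1$. In the classical strongly pseudoconvex setting this subtlety is absent and the proof goes through directly.
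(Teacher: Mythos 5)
Your argument is the same as the paper's: diagonalize the Levi form, use the positive diagonal entries $r_{jj}$ together with Lemma~\ref{cartan} to conclude each $L_j$ is of type $2$, and apply Theorem~\ref{general}. Two remarks. First, the concern you raise about $q_o>0$ is legitimate: the paper opens its proof with the assertion that every eigenvalue of the Levi form is positive, which strict $q$-pseudoconvexity (i.e.\ $\sum_{j=1}^{q}\lambda_j-\sum_{j=1}^{q_o}r_{jj}\geq c>0$) does not by itself force when $q_o>0$; the paper simply elides this case, so your explicit acknowledgment that the argument as written only covers $q_o=0$ is, if anything, more careful than the original. Second, the conclusion of Theorem~\ref{general} is already the full estimate $\no{u}_\epsilon\lesssim Q(u,u)+\no{u}_0^2$, not merely the tangential bound $|||Tu|||^2_{-1+\epsilon}\lesssim Q(u,u)+\no{u}^2$, which appears only as an intermediate step inside its proof; so your final step of combining with the basic estimate~\eqref{aaa} to ``upgrade'' the tangential bound is unnecessary—the theorem hands you the subelliptic estimate directly.
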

\begin{proof}
Since every eigenvalue of the Levi form is positive, by Lemma ~\ref{cartan}, we have that every commutator of the kind $[L_j,\bar L_j]$ generates $T$. Then the $L_j$'s have all type 2. By Theorem ~\ref{general} we then have $\frac{1}{2}$-subelliptic estimates. 
\end{proof}

The second way by which we can obtain this result was originally pursued by H\"ormander in \cite{H65} and exploits a clever choice of weights in the Morrey-Kohn-H\"ormander formula.
In the case of a $q$-strictly pseudoconvex domain we can see that, up to a change of coordinates, the boundary has equation $x_n=|z'|^2+...$, and the domain is $x_n<|z'|^2+...$, where $z=(z',z_n)$ and the dots stand for terms of order 3 or higher. We then consider a family of weights $\{\phi_\delta\}_\delta$ with $\phi_\delta=-\log(\frac{|z'|^2-x_n}{\delta} +1)$. Observing that we have $\partial\bar\partial\phi_\delta>\delta^{-1}$, by plugging the weights into \eqref{kmh} we have that the integral $\int_\Om \delta^{-1}|u|^2dV$ is controlled by the energy. Since $||\delta^{-1/2}u||^2\sim ||u||^2_{H^{\frac{1}{2}}}$ (cf. \cite{CS01}), we obtain the $\frac{1}{2}$-subelliptic estimate for strictly $q$-pseudoconvex domains. We can push this technique further: for a domain with boundary expressed by $x_n=|z'|^{2m}$, using the weights $\{\phi_\delta\}_\delta=\{-\log(\frac{|z'|^{2m}-x_n}{\delta} +1)+\log(\frac{|z|^2} {|\delta|^{\frac{1}{2m}}}+1)\}$, observing that $\partial\bar\partial\phi_\delta>\delta^{-1/m}$ and that $||\delta^{-1/{2m}}u||^2\sim ||u||_{H^{\frac{1}{2m}}}$, we can establish $\frac{1}{2m}$-subelliptic estimates. Exploiting the full power of this method, Catlin proved an ultimate criterion for subellipticity, stating in \cite{C87} that subelliptic estimates hold for $k$-forms at $z_0$ if and only if the D'Angelo type $D_k{z_0}$ is finite.
\\

We point out that subelliptic estimates are included in a more general theory of estimates for the $\bar\partial$-Neumann problem, namely that of the ``$f$-estimates'' (cf. \cite{Kh09}). 
Recall that $\Lambda$ is the pseudodifferential operator whose symbol is $\Lambda_\xi=(1+|\xi|^2)^{1/2}$. With the notation that $f(\Lambda)$ stands for the pseudodifferential operator whose symbol is $f(\Lambda_\xi)$, we say that we have a $f$-estimate if 
 $||f(\Lambda)u|| \lesssim Q(u,u)+||u||^2$ for any $u\in C^\infty_c(U \cap \Om)^k$. Note that subelliptic estimates correspond to a choice of $f(\Lambda_\xi)=|\Lambda_\xi|^{\epsilon}$ for some $o<\epsilon<1$. For a choice of $f(\Lambda_\xi)=\log|\Lambda_\xi|$ we obtain instead superlogarithmic estimates, which we will deal with in the following section. We recall a general result that is related to these estimates, for whose proof see \cite{Kh09} or \cite{KZ12}.
\bt\Label{fest}
Let $\Om$ be a pseudoconvex domain and $r=0$ the local equation of $b\Om$. If $|r|<F(|z'|)$ and we have a $f$-estimate, then, for small $\delta$, $$\frac{f(\delta^{-1})}{\log(\delta^{-1})} \lesssim F^*(\delta )^{-1},$$ where $F^*$ is the inverse function to $F$. 
\et
With this on hand we can give a nice proof of a well known result.
\bt
No subelliptic estimates exist for $\epsilon > 1/2$.
\et

\begin{proof}
From class $C^2$ of the domain we have that $|r|<|z'|^2$. Suppose we have a $\epsilon$-subelliptic estimate. Hence if we invoke Theorem ~\ref{fest} with $F(|z'|)=|z'|^2$ and $f(t)=|t|^{\epsilon}$, for $t=\delta^{-1}$, we get that $\delta^{1/2}\lesssim \delta^\epsilon \log(\delta^{- 1})$ for small $\delta>0$, from which we have $\epsilon \le 1/2$. 
\end{proof}

\section{Superlogarithmic estimates}
We have seen in the previous section how subellipticity implies local hypoellipticity and we have proved (through the notion of type or the use of weights) some conditions for subellipticity of the $\bar\partial$-Neumann problem. There are cases in which subelliptic estimates do not hold but the $\bar\partial$-Neumann problem is nevertheless hypoelliptic. Works by Christ \cite{Ch00}\cite{Ch01}\cite{Ch02}, Kusuoka and Stroock \cite{KS}
provided examples in which local regularity is governed by a weaker kind of estimates, called superlogarithmic. 
In this section we give the definition of superlogarithmic estimates and then we prove how they imply hypoellipticity for the $\bar\partial$-Neumann problem. For the proof we follow the outline of \cite{K02}, where Kohn developed the theory of superlogarithmic estimates and proved this result for the tangential problem.

We should point out that in Chapter 8 Kohn switches his attention to the $\bar\partial$-Neumann and proves that superlogarithmic estimates for the tangential problem imply hypoellipticity for the $\bar\partial$-Neumann problem. However, our approach is a bit different, since we start and end up with the $\bar\partial$-Neumann. Moreover, our result is not a direct consequence of Kohn's theorem, since it is not possible to transfer a superlogarithmic estimate from the tangential system to the $\bar\partial$-Neumann problem. In fact in general (see Khanh) for an $f$-estimate we have a logarithmic loss in passing from the tangential setting to the $\bar\partial$-Neumann, so that subelliptic estimates can be transferred with an arbitrarily small loss, but superlogarithmic ones cannot. 

\bd
We say that a superlogarithmic estimate holds in a neighborhood $U$ of a boundary point if for every $\delta>0$ there exists $C_\delta$ such that 
\begin{equation}\Label{superlog}
||(\log\Lambda)u||^2 \le \delta Q(u,u)+C_\delta||u||^2
\end{equation}
for any $u\in C^\infty_c(U \cap \Om)^k$
\ed  

\bt
Suppose \eqref{superlog} holds in a neighborhood $U$ of a boundary point. Then we have local boundary regularity for the $\bar\partial$-Neumann problem. 
\et

\begin{proof}
We have to prove that if $u$ is a square integrable $(p,q)$-form such that $\Box u=\alpha$, with $\alpha$ square integrable whose restriction to $U$ is $C^\infty(U)$, the restriction of $u$ to $U$ is also in $C^{\infty}(U)$. More precisely, we will prove that, for any $\zeta_0, \zeta_1\in C_c^{\infty}(U)$ with $\zeta_0\prec\zeta_1$, we have that, for any $s>0$, if $\zeta_1\alpha\in H^s$ then $\zeta_0 u\in H^s$. To do this we prove the following estimate: given $s>0$, there exists $C_s>0$ such that 
\begin{equation}\Label{resultfake}
||\zeta_0 u||_s \le C_s(||\zeta_1\alpha||_s+||u||)
\end{equation}
for all $(p,q)$-forms $u$ with smooth coefficients in $U$. 
We will in fact prove that
\begin{equation}\Label{result}
|||\zeta_0 u|||_s \le C_s(||\zeta_1\alpha||_s+||u||).
\end{equation}
How to pass from \eqref{result} to \eqref{resultfake} is standard literature (see for example \cite{CS01}).\\
Let $\sigma\in C_c^{\infty}(U)$ such that $\zeta_0\prec\sigma\prec\zeta_1$. We define the pseudodifferential operator $R^s$ by
\begin{equation}
R^s u(x,r)=\int e^{ix\cdotp \xi}(1+|\xi|^2)^{\frac{s\sigma(x)}{2}}\hat u(\xi,r)d\xi,
\end{equation}
for $u\in C_c^{\infty}(U)$.
Since $\sigma=1$ in the support of $\zeta_0$, the symbol of $(\Lambda^s-R^s)\zeta_0$ is zero, and we have
\begin{equation}\Label{com0}
\begin{split}
|||\zeta_0u|||_s\le ||R^s(\zeta_0u)||+C||u||&=||R^s(\zeta_0\zeta_1u)||+C||u||\\
&\le ||[R^s,\zeta_0](\zeta_1u)||+||\zeta_0R^s(\zeta_1u)||+C||u||\\
&\le ||[R^s,\zeta_0](\zeta_1u)||+||R^s(\zeta_1u)||+C||u||. 
\end{split}
\end{equation}
We have, by pseudodifferential calculus,
\begin{equation}\Label{com1}
||[R^s,\zeta_0](\zeta_1u)|| \lesssim||R^{s-1}(\zeta_1u)||+||u||
\end{equation}
and, with $\zeta' \in C_c^{\infty}(U)$, $\zeta_1\prec \zeta'$,
\begin{equation}\Label{com2}
\begin{split}
||R^s(\zeta_1u)||=||R^s\zeta'\zeta_1u||
&\le||\zeta'R^s(\zeta_1u)||+||[R^s,\zeta'](\zeta_1u)||\\
&\le||\zeta'R^s(\zeta_1u)||+O(||R^{s-1}(\zeta_1u)||+||u||)\\
&\le||\zeta'R^s\zeta_1u||+C||u||,
\end{split}
\end{equation}
where the last inequality follows from the fact that $R^{s-1}-\zeta'R^{s-1}$ has order $-\infty$.
Combining \eqref{com1} and \eqref{com2} with \eqref{com0} we get
\begin{equation}\Label{combine}
|||\zeta_0u|||_s \lesssim ||\zeta'R^s(\zeta_1u)||+||u||.
\end{equation}
Since $R^s\zeta_1-R^s\zeta'$ is of order $-\infty$, we will replace from now on $\zeta'R^s\zeta_1$ by $\zeta'R^s\zeta'$, which we rename $\zeta_1 R^s \zeta_1$ (since the point here is that $\zeta_1\succ\sigma$).
By hypothesis \eqref{superlog} applied with $u$ replaced by $\zeta_1R^s(\zeta_1u)$ we have that, for every small $\delta>0$, there exists $C_\delta$ such that
\begin{equation}\Label{this}
||(\log\Lambda)\zeta_1R^s(\zeta_1u)|| \le \delta Q(\zeta_1R^s(\zeta_1u),\zeta_1R^s(\zeta_1u))+C_\delta||u||^2.  
\end{equation}
Recall that the condition $\Box u=\alpha$ is equivalent to $Q(u,v)=(\alpha,v)$ for all $(p,q)$-forms $v$.
So we have 

\begin{equation}\Label{bomba}
\begin{split}
Q(\zeta_1R^s(\zeta_1u),\zeta_1R^s(\zeta_1u))&=(\zeta_1R^s\zeta_1\bar\partial u, \bar\partial\zeta_1R^s(\zeta_1u))+
(\zeta_1R^s\zeta_1\bar\partial^* u, \bar\partial^*\zeta_1R^s(\zeta_1u))\\
&+([\bar\partial,\zeta_1R^s\zeta_1]u,\bar\partial\zeta_1R^s(\zeta_1u))+([\bar\partial^*,\zeta_1R^s\zeta_1]u,\bar\partial^*\zeta_1R^s(\zeta_1u) ) \\
&=(\bar\partial u,\zeta_1(R^{s})^*\zeta_1\bar\partial \zeta_1R^s(\zeta_1u))+(\bar\partial^* u,\zeta_1(R^{s})^*\zeta_1\bar\partial^* \zeta_1R^s(\zeta_1u )) + errI \\
&=Q(u,\zeta_1(R^s)^*(\zeta_1)^2R^s(\zeta_1 u))+(\bar\partial u, [\zeta_1(R^s)^*\zeta_1, \bar\partial]\zeta_1R^s(\zeta_1 u))\\
&+(\bar\partial^* u, [\zeta_1(R^s)^*\zeta_1, \bar\partial^*]\zeta_1R^s(\zeta_1 u)) + err_1\\
&=(\alpha,\zeta_1(R^s)^*(\zeta_1)^2R^s(\zeta_1 u)) + err_1 + err_{2}\\
&=(\zeta_1R^s(\zeta_1\alpha), \zeta_1R^s(\zeta_1 u))+ err_1 + err_{2}.
\end{split}
\end{equation}
If there were no errors, we would have finished.
In fact, it would be enough to use Cauchy-Schwarz in the last equality of \eqref{bomba}, give small and large constant $\epsilon$ and $\epsilon^{-1}$, and finally, after substituting in \eqref{this}, absorbe $\epsilon||\zeta_1R^s(\zeta_1 u)||$ to the left and combine with \eqref{combine} to get \eqref{result}.
But we have two error terms. Explicitly:
\begin{equation}\Label{errI}
err_1=([\bar\partial,\zeta_1R^s\zeta_1]u,\bar\partial\zeta_1R^s(\zeta_1u))+([\bar\partial^*,\zeta_1R^s\zeta_1]u,\bar\partial^*\zeta_1R^s(\zeta_1u) )
\end{equation}
and
\begin{equation}\Label{errII}
\begin{split}
err_{2}&=(\bar\partial u, [\zeta_1(R^s)^*\zeta_1, \bar\partial]\zeta_1R^s(\zeta_1 u))
+(\bar\partial^* u, [\zeta_1(R^s)^*\zeta_1, \bar\partial^*]\zeta_1R^s(\zeta_1 u))\\
&=([[\zeta_1(R^s)^*\zeta_1,\bar\partial)]^*,\bar\partial]u, \zeta_1R^s(\zeta_1 u)) + ([[\zeta_1(R^s)^*\zeta_1,\bar\partial^*)]^*,\bar\partial^*]u, \zeta_1R^s(\zeta_1 u))\\
&+([\zeta_1(R^s)^*\zeta_1,\bar\partial]^*u, \bar\partial^*\zeta_1R^s(\zeta_1 u))+
([\zeta_1(R^s)^*\zeta_1,\bar\partial^*]^*u, \bar\partial\zeta_1R^s(\zeta_1 u)).
\end{split}
\end{equation}
By the Jacobi identity we have
\begin{equation}\Label{Jacob}
[\bar\partial,\zeta_1R^s\zeta_1]=[\bar\partial, \zeta_1]R^s\zeta_1+\zeta_1[\bar\partial, R^s]\zeta_1+\zeta_1R^s[\bar\partial,\zeta_1].
\end{equation}
Since the support of the derivative of $\zeta_1$ is disjoint from the support of $\sigma$, the operator $[\bar\partial, \zeta_1]R^s\zeta_1+\zeta_1R^s[\bar\partial,\zeta_1]$ is bounded. But, in calculating the symbol $[\bar\partial, R^s]$, $\bar\partial$ hits $R^s$ and releases $\log\Lambda$. More precisely, we have that $||\zeta_1[\bar\partial, R^s]\zeta_1||\lesssim ||(\log\Lambda)\zeta_1R^s\zeta_1 u||+||u||$. With this observation on hand, we can control all the error terms. Using Cauchy-Schwarz and giving small and large constant, we get
\begin{equation}\Label{errorcontrolled2}
err_1 + err_{2} \le s.c.Q(\zeta_1 R^s(\zeta_1u),\zeta_1 R^s(\zeta_1u))+ l.c.(||(\log\Lambda)\zeta_1R^s\zeta_1 u||^2 + ||u||^2).
\end{equation}
We are almost done. Going back to \eqref{bomba} and bounding the errors with \eqref{errorcontrolled2}, we can absorbe $s.c.Q(\zeta_1 R^s(\zeta_1u),\zeta_1 R^s(\zeta_1u))$ on the left, so that the error now carries only the term  $l.c.(||(\log\Lambda)\zeta_1R^s\zeta_1 u||^2 + ||u||^2)$. We can forget about the $||u||^2$, which is present in the right side of the relation we want to obtain. But it is worth pointing out that even if $||(\log\Lambda)\zeta_1R^s\zeta_1 u||^2$ carries a large constant, when we substitute into \eqref{this} it gets multiplied by $\delta$, which is arbitrarily small and can be chosen in such a way that $||(\log\Lambda)\zeta_1R^s\zeta_1 u||^2$ comes in the end with a small constant. Hence this term too can be absorbed on the left in \eqref{this}. Combination with \eqref{combine} yelds \eqref{result} and hence \eqref{resultfake}. 
Now the result follows from elliptic regularization, as explained in \cite{CS01}. 
\end{proof}
\chapter{Superlogarithmic multipliers}
There are cases in which local regularity is not ruled by estimates but it is related to
the geometry of the domain. In \cite{K00} Kohn provided an example of a domain in which not
even superlogarithmic estimates hold, but we have local hypoellipticity for the $\bar\partial$-Neumann
problem. Kohn proved in fact this result for the tangential problem, but it was generalized to
the $\bar\partial$-Neumann by Baracco, Khanh and Zampieri in \cite{BKZ14}. Moreover, as we have already stated in the introduction,
Baracco, Pinton and Zampieri in  \cite{BPZ14} gave a fully geometrical explanation of the phenomenon, relating
local hypoellipticity to the presence of a sequence of cut-off $\{\eta\}$ such that the gradient $\di\eta$ and the Levi form $\di\dib\eta$ are are subelliptic multipliers.
In this section we improve the result of \cite{BPZ14} by
using superlogarithmic multipliers, which are weaker than subelliptic. In order to prove
our new result we need a twisted Morrey-Kohn-H\"ormander formula in which a general pseudodifferential operator
appears as a twisting term (twisted formulas are already known when the twisting term is
a function, cf. \cite{S10}). We need in fact to twist the formula not only by $\eta$ but also by $R^s$, where $R^s$ is the modification of the standard elliptic operator  of order $s$ introduced by Kohn in \cite{K02}.
\section{The  Kohn-H\"ormander-Morrey formula twisted by a pseudodifferential operator} 
We consider a general pseudodifferential operator $\Psi$. We work as usual on a pseudoconvex domain $\Omega$ with smooth boundary and within an adapted frame, where we denote as $\partial_j$ (resp $\bar\partial_j$) what we had previously indicated as $L_j$ (resp $\bar L_j$). We define constants
$c_{ij}=c_{ij}^n$ and $c_{ij}^h,\,\,i,j,h=1,...,n-1$ by means of the identity
\begin{equation}
\Label{2.5}
[\di_i,\dib_j]=c_{ij}(\di_n-\dib_n)+\sum_{h=1}^{n-1}c_{ij}^h\di_h-\sum_{h=1}^{n-1}\bar c_{ji}^h\dib_h.
\end{equation}
Thus $(c_{ij})$ is the matrix of the Levi form  $\di\dib r|_{T^\C b\Om}$ in the basis $\{\om_j\}$.
Moreover, we denote by Op$^\star$ an operator of order $\star$, and when $\star$ depends on $\Psi$ it is assumed that the support of the operator is contained in an arbitrarily small conical neighborhood of supp$\,\Psi$. In particular $\Opp$ refers to an operator of order 0 which depends on the $C^2$-norm of $b\Om$ but not on $\phi$ or $\Psi$. We also use the notation 
$$
Q^\phi_\Psi(u,u)=\NO{\Psi \dib u}_\phi+\NO{\Psi \dib^* u}_\phi,\quad u  \in   D_{\dib}\cap {D_{\dib^*}}.
$$
\bt
\Label{t2.1}
We have
\begin{equation}
\Label{2.1}
\begin{split}
\int_{b\Om}e^{-\phi}(c_{ij})(\Psi u,\overline{\Psi u})dV&+\int_\Om e^{-\phi}\di\dib\phi(\Psi u,\overline{\Psi u})dV\simleq Q^\phi_{\Psi}(u,\bar u)+\left|\int e^{-\phi}[\di,[\dib,\Psi^2]](u,\bar u)dV\right|
\\
&+\NO{[\di,\phi]\contrazione \Psi u}_\phi+\NO{[\di,\Psi]\contrazione u}_\phi+\Big|\sum_h\int (c_{ij}^h)\Big([\di_{h},\Psi](u),\bar u\Big)\,dV\Big|
\\
&+Q^\phi_{\Opm}(u,\bar u)+\NO{\Opm u}_\phi+\NO{\Psi u}_\phi.
\end{split}
\end{equation}
\et
\begin{remark}
In our application in the following section, $[\di,[\dib,\Psi^2]]$ and $[\di,\Psi]\contrazione$ have good estimates. Also, $\phi$ has ``selfbounded gradient", that is
$$
[\di,\phi]\contrazione<\di\dib \phi,
$$
where inequality is meant in the operator sense. In particular, the term in the right of \eqref{2.1} which involves $[\di,\phi]\contrazione$ is absorbed in the left.
\end{remark}
\begin{remark}
Formula \eqref{2.1} is also true for complex $\Psi$. In this case, one replaces $[\di,[\dib,\Psi^2]]$ by $[\di,[\dib,|\Psi|^2]]$ and add an additional error term $[\di,\bar\Psi]\contrazione$.
\end{remark}

\begin{proof}
We start from 
\begin{equation}
\Label{2.2}
e^{\phi}\Psi^{-2}[\dib_i,e^{-\phi}\Psi^2]=-\phi_{\bar i}+2\frac{[\dib_i,\Psi]}\Psi+\frac{\Op}{\Psi^2},
\end{equation}
whose sense is fully clear when both sides are multiplied by $\Psi^2$. We then have 
\begin{equation}
\Label{2.4}
\dib^*_{e^{-\phi}\Psi^2}=\dib^*+\di\phi\contrazione-2\frac{[\di,\Psi]}\Psi\contrazione+\frac{\Op}{\Psi^2},
\end{equation}
which is obtained through integration by parts and using \eqref{2.2} when $\bar\partial$ hits the weight $e^{-\phi}\Psi^2$. Similarly to what we did in Chapter 1, \eqref{2.5}, we are lead to define the following vector fields 
\begin{equation}
\Label{2.3}
\delta_i:=\di_i-\phi_i+2\frac{[\di_i,\Psi]}\Psi+\frac{\Op}{\Psi^2}+\Opp
\end{equation}
which express the components of $\dib^*_{e^{-\phi}\Psi^2}$.
Now, using the trivial identity $\di\dib=-\dib\di$, we have an analogous of \eqref{2.20}
\begin{equation}
\Label{2.6}
\begin{split}
[\delta_i&,\dib_j]=[\di_i,\dib_j]+\phi_{ij}-\sum_{h=1}^{n}c_{ij}^h\phi_h-2\frac{[\di_i,[\dib_j,\Psi]]}\Psi+2\frac{[\di_i,\Psi]\otimes[\dib_j,\Psi]}{\Psi^2}+\frac{\Op}{\Psi^2}+\Opp
\\
&=c_{ij}(\delta_n-\dib_n)-\sum_{h=1}^{n-1}\bar c_{ji}^h\dib_h+\sum_{h=1}^{n-1} c_{ij}^h\delta_h+\phi_{ij}-2\sum_hc^h_{ij}\frac{[\di_h,\Psi]}{\Psi}\contrazione
\\
&-2\frac{[\di_i,[\dib_j,\Psi]]}\Psi+2\frac{[\di_i,\Psi]\otimes[\dib_j,\Psi]}{\Psi^2}+\frac{\Op}{\Psi^2}+\Opp.
\end{split}
\end{equation}
We also have to observe that, being $\Psi$ a pseudodifferantial operator, in general $\Psi \dib^{(*)}u \Psi \dib^{(*)}u\ne \Psi^2 \dib^{(*)}u$. We then have (cf. \cite{BKPZ14})
$$
\NO{\Psi\dib^{(*)}u}_\phi=\int e^{-\phi}\Psi^2|\dib^{(*)}u|^2\,dV+Q_{\Opm}(u,\bar u)+\NO{\Opm u}_\phi+\No{\Opp\Psi u}_\phi.
$$
Following the same steps as in theorem ~\ref{kmh} we get the ``basic estimate with weight $e^{-\phi}\Psi^2$"
\begin{multline}
\Label{2.7}
\int_{b\Om}e^{-\phi}(c_{ij})(\Psi u,\overline{\Psi u})\,dV+\int_\Om[\di,[\dib,e^{-\phi}\Psi^2]](u,\bar u)\,dV
\\
-\NO{[\di,\phi]\contrazione \Psi u}_\phi-\NO{[\di,\Psi]\contrazione u}_\phi
+\sum_j\NO{\Psi\bar\di_j u}_\phi
\\
\simleq \NO{\Psi\dib u}_\phi+\NO{\Psi\dib^*_{e^{-\phi}\Psi^2}u}_\phi+sc\NO{\Psi\bar\nabla u}_\phi+\Big|\sum_h\int (c_{ij}^h)\Big([\di_{h},\Psi](u),\bar u\Big)\,dV\Big|
\\+Q^\phi_{\Opm}(u,\bar u)+\NO{\Opm u}_\phi+\NO{\Psi u}_\phi.
\end{multline}
In \eqref{2.7} we absorbe the term with small constant and rewrite $[\di,[\dib,e^{-\phi}\Psi^2]]$ by the aid of \eqref{2.2}; what we get is
\begin{multline}
\Label{2.8}
\int_{b\Om}e^{-\phi}\Psi^2 c_{ij}(u,\bar u)dV+\int_{\Om}e^{-\phi}\Psi^2 \phi_{ij}(u,\bar u)dV-\NO{[\di,\phi]\contrazione \Psi u}_\phi
\\+\int e^{-\phi}[\di_i,[\dib_j,\Psi^2]](u,\bar u)dV
-\NO{[\di,\Psi]\contrazione u}_\phi+\NO{\Psi\bar\nabla  u}_\phi
\\
\simleq \NO{\Psi\dib u}_\phi+\NO{\Psi\dib^*_{e^{-\phi}\Psi^2}u}_\phi
+Q^\phi_{\Opm}(u,\bar u)+\Big|\sum_h\int (c_{ij}^h)\Big([\di_{h},\Psi](u),\bar u\Big)\,dV\Big|
\\
+\NO{\Opm u}_\phi+\NO{\Psi u}_\phi.
\end{multline}
To carry out our proof we need to replace $\dib^*_{e^{-\phi}\Psi^2}$ by $\dib^*$. We have from \eqref{2.4}
\begin{equation}
\Label{2.9}
\begin{split}
\NO{\Psi \dib^*_{e^{-\phi}\Psi^2}u}_\phi&\simleq \NO{\Psi\dib u}_\phi+\NO{\Psi\di \phi\contrazione u}_\phi+\NO{[\dib,\Psi]\contrazione u}_\phi
\\
&+\underset{\T{\#}}{\underbrace{2\Big|\Re e (\Psi\dib^* u,\overline{\Psi\di\phi\contrazione u})_\phi\Big|+2\Big|\Re e (\Psi\dib^* u,\overline{[\di,\Psi]\contrazione u})_\phi+2\Big|\Re e (\Psi\di\phi\contrazione u,\overline{[\di,\Psi]\contrazione u})_\phi\Big|}}.
\end{split}
\end{equation}
We next estimate by Cauchy-Shwarz inequality
\begin{equation}
\Label{supernova}
\#\simleq \NO{\Psi\dib^* u}_\phi+\NO{\Psi\di\phi\contrazione u}_\phi+\NO{[\di,\Psi]\contrazione u}_\phi.
\end{equation}
We move the third, forth and fifth terms from the left to the right of \eqref{2.8}, use \eqref{2.9} and \eqref{supernova} and end up with \eqref{2.1}. 
\end{proof}
\section{\texorpdfstring{$F$}{F} type, twisted \texorpdfstring{$f$}{f} estimate and regularity of \texorpdfstring{$N$}{N}.}
\Label{s3}
We start by recalling a result by \cite{KZ10}. In our presentation it contains a specification of the estimate by the Levi form which is important in our application. We consider a bounded smoothly bounded pseudoconvex domain $\Om\subset\C^n$. For a form $u\in D_{\dib^*}$, let $u=\sum_k\Gamma_ku$ be the decomposition into wavelets (cf. \cite{K02}), and $Q(u,\bar u)=\NO{\dib u}+\NO{\dib^*u}$ the energy. We use the notation $(c_{ij})$  and $(\phi_{ij})$ for the Levi form of 
$b\Om$ and of a function $\phi$ respectively. We introduce a real function $F$ such that $\frac{F(s)}{s^2}\searrow0$ as $s\searrow0$ and set $f(t):=(F^*(t^{-\frac{1}{2}}))^{-1}$ where $F^*$ denotes the inverse.
\bt (cf. \cite{KZ10} Theorem~2.1)
\Label{t3.1}
Assume that $b\Om$ has type $F$ along a submanifold $S\subset b\Om$ of CR dimension $0$ in the sense that $(c_{ij})\simgeq \frac {F(d_S)}{d_S^2}\T{Id}$ where $d_S$ is the Euclidean distance to $S$ and Id the identity of $T^\C b\Om$. Then there is a uniformly bounded family of weights $\{\phi^k\}$ which yield the $f$ estimate
\begin{equation}
\Label{3.1}
\begin{split}
\NO{f(\Lambda)u}&\simleq \int_{b\Om}(c_{ij})\Lambda^{\frac{1}{2}}(u,\Lambda^{\frac{1}{2}}\bar u)\,dV+\sum_k\int_\Om (\phi^k_{ij})(\Gamma_ku,\overline{\Gamma_k u})+\NO{u}_0
\\
&\simleq Q(u,\bar u).
\end{split}\end{equation}
\et
\begin{proof}
We give two parallel proofs inspired to \cite{KZ10}, resp. \cite{BKPZ14}, which use the families of weights
$$
\psi^k:=-\log(\frac{-r}{2^{-k}}+1)+\chi(\frac{d_S}{a_k})\log(\frac{d_S^2}{a_k^2}+1)\quad\T{resp. }\phi^k:=\chi(\frac{d_S}{a_k})\log(\frac{d_S^2}{a_k^2}+1).
$$
Here $r=0$ is an equation for $b\Om$ with $r<0$ on $\Om$, $a_k:=F^*(2^{-k})$ and $\chi$ is a cut-off such that $\chi\equiv1$ in $[0,1]$ and $\chi\equiv0$ for $s\ge2$. 
We also use the notation $S_{a_k}$ for the strip $S_{a_k}:=\{z\in\Om:\,d_{b\Om}(z)<a_k\}$. 
Following word by word the proof of \cite{KZ10}, resp. \cite{BKPZ14}, we conclude
$$
\NO{f(\Lambda)u}_\Om\simleq\sum_k\int_{S_{2a_k}\setminus S_{a_k}}(c_{ij})(d_{b\Om}^{-\frac12}u,\overline{d_{b\Om}^{-\frac12}u})\,dV+\sum_k\int_\Om (\phi^k_{ij})(\Gamma_k u,\overline{\Gamma_ku})+\NO{u}_\Om
$$
resp.
$$
\NO{f(\Lambda)u}_\Om\simleq\int_{\Om}(c_{ij})(\Lambda^{\frac12}u,\overline{\Lambda^{\frac12}u})\,dV+\sum_k\int_\Om (\phi^k_{ij})(\Gamma_k u,\overline{\Gamma_ku})+\NO{u}_\Om.
$$
Finally, the conclusion follows from
$$
\no{d_{b\Om}^{-\frac12}u}_\Om\simleq \no{u}^b_0+\sum_{j=1}^n{\bar L_j u},\quad\T{ resp. }\no{\Lambda^{\frac12}u}_\Om\simleq \no{u}^b_0+\sum_{j=1}^n\no{\bar L_j u},
$$
according to \cite{K02} Section~8.
\end{proof}

We modify the weights $\phi^k$ to $\phi^k+t|z|^2$ so that their Levi form releases an additional  $t\T{Id}$ for $t$ big. They are absolutely uniformly bounded with respect to $k$ and to $t$ provided that we correspondingly shrink the neighborhood $U=U_t$.  Possibly by raising to exponential, boundedness implies ``selfboundedness of the gradient" when the weight is plurisubharmonic. In our case, in which to be positive is not $(\phi^k_{ij})$ itself but $2^k(c_{ij})+(\phi^k_{ij})$, we have, for $|z|$ small
\begin{equation}
\Label{stranova}
\begin{split}
|\di_b\phi|^2&=|\di_b(\phi^k+t|z|^2)|^2
\\
&\simleq |\di_b\phi^k|^2+t^2|z|^2
\\
&\leq 2^k(c_{ij})+(\phi^k_{ij})+t.
\end{split}
\end{equation}
Going back to \eqref{2.1} under this choice of $\phi$, we have that $\no{\di_b\phi\contrazione  \Psi u}^2$ can be removed from the right side. We combine Theorem~\ref{2.1} with Theorem~\ref{t3.1} formula \eqref{3.1}, observe that the weights $\phi^k$ can be removed from the norms by uniform boundedness,  and get the proof of the following

\bt
\Label{t3.2}
Let $b\Om$ have type $F$ along $S$ of CR dimension 0. Then we have the $f$ estimate
\begin{equation}
\Label{3.2}
\begin{split}
||&f(\Lambda)\Psi u||^2_0\simleq (c_{ij})(\Psi\Lambda^{\frac12}u,\overline{\Psi\Lambda^{\frac12}u})\,dV+\sum_k\int(\phi^k_{ij})(\Gamma_k\Psi u,\overline{\Gamma_k\Psi u})\,dV+\sum_j\NO{\bar L_j\Psi u}_0+t\NO{\Psi u}_0
\\
&\simleq Q_\Psi(u,\bar u)+\NO{[\di,\Psi]\contrazione u}_0+\int_\Om [\di,[\dib,\Psi^2]](u,\bar u)\,dV+Q^\phi_{\Opm}(u,\bar u)+\NO{\Opm u}_0+\NO{\Psi u}_0.
\end{split}
\end{equation}
\et
We have as application a criterion of regularity for the Neumann operator $N$ in a new class of domains.  Let $b\Om$ be ``block decomposed", that is, defined by $x_n=\sum_{j=1}^m h^{I^j}(z_{I^j},y_n)$ where $z=(z_{I^1},...,z_{I^m},z_n)$ is a decomposition of  coordinates.
\bt
\Label{t3.3}
Assume that
\begin{equation}
\Label{3.4}
\begin{cases}
 \T{ (a) $h^{I^j}$ has infraexponential type along a totally real $S^{I^j}\setminus\Gamma^{I^j}$ where $S^{I^j}$ is}
\\
\quad\T{  totally real  in $\C^{I^j}\times \C_{z_n}$ and $\Gamma^{I^j}$ is a curve of $\C^{I^j}\times\C_{z_n}$ transversal to $\C^{I^j}\times\{0\}$,}
\\
 \T{ (b) $h^j_{z_j}$ are superlogarithmic multipliers},
\\
\T{ (c) $c_{ij}^h$ are subelliptic multipliers.}
\end{cases}
\end{equation}
Then, we have local hypoellipticity of $\Box$ at $z_o=0$. 
\et
In the same class of domains, it is proved in \cite{BKPZ14} the hypoellipticity of the Kohn-Laplacian $\Box_b$. 

\begin{proof}
The proof is the same as in \cite{BKPZ14} Theorem~1.4. The argument is that, for a system  $\{\eta\}$ of cut-off, the vectors $\di\eta$  are superlogarithmic multipliers. To see it we start from
$$
\di\eta=(\di_b\eta,\di_\nu\eta),
$$
where $\di_\nu$ denotes the normal derivative.
Now, $\di_b\eta$ is a superlogarithmic multiplier by the hypothesis (i) and (ii). $\di_\nu\eta$ is $1$ but it hits $u_\nu$ which is 0 at $b\Om$ and therefore enjoys elliptic estimates. Therefore the full $\di\eta$ is a superlogarithmic multiplier.
\end{proof}

In case of a single block $x_n=h^{I^1}$ we regain \cite{BPZ14} which   transfers \cite{K00} from the tangential system to the $\dib$-Neumann problem and also gives a more general statement. The proof is far more efficient because it uses the elementary decomposition $\di\eta=(\di_b\eta,\di_\nu\eta)$ instead of $Q=Q^\tau\oplus \bar L_n$ (over tangential forms $u^\tau$) which requires the heavy technicalities of the harmonic extension.  
\br
\Label{r3.0,5}
The Levi form $(c_{ij})$ is a $\frac12$ subelliptic multiplier. If $b\Om$ is rigid and the Levi form is diagonal, then $(c_{ij}^h)\simleq (c_{ij})$ are also $\frac12$ subelliptic multipliers.
\er
\br
\Label{r3.1}
the above proof shows a general criterion. If $\di_b\eta$ is a superlogarithmic multiplier for $\Box_b$, then it is also for the $\dib$-Neumann problem; in this case therefore, the hypoellipticity of $\Box_b$ implies that of $\Box$.
\er

\noindent
{\it Example} 
Let $b\Om$ be defined by
$$
x_n=\sum_{j=1}^{n-1}e^{-\frac1{|z_j|^a}}e^{-\frac1{|x_j|^b}}\qquad\T{ for any $a\geq0$ and for $b<1$}.
$$
Then, \eqref{3.4} (a) is obtained starting from $h^j_{z_j\bar z_j}\simgeq \frac{e^{-\frac1{|x_j|^b}}}{|x_j|^2}$, that is, the condition of type $F^2_j:=e^{-\frac1{|\delta|^b}}$ along $S_j=\R_{x_j}\times\{0\}$. This yields the estimate of the $f$ norm for  $f(t)=
\log^{\frac1b} (t)$; since $\frac1b>1$, this is superlogarithmic. \eqref{3.4} (b) follows from $|h^j_{z_j}|^2\simleq h^j_{z_j\bar z_j}$ which says that the $h^j_{z_j}$'s are not only superlogarithmic, but indeed $\frac12$-subelliptic,  multipliers. Moreover, since $b\Om$ is rigid and the Levi form is diagona, then the $(c_{ij}^h)$ are subelliptic multipliers (cf. \ref{r3.0,5}).  Altogether we have that $\Box$ is hypoelliptic according to Theorem~\ref{t3.3}.

 \end{document}